\newcommand{\vol}{\mathrm{vol}}
\newcommand{\tr}{\mathrm{tr}}
\newcommand{\inj}{\mathrm{inj}}
\newcommand{\ud}{\mathrm{d}}%derivative operator
\newcommand{\id}{\,\mathrm{d}}
\newcommand{\CT}{\mathscr{C}}
\renewcommand{\exp}{\mathop{\rm exp}\nolimits}
\providecommand{\Exp}{\mathop{\rm exp}\nolimits}
\providecommand{\vol}{\mathop{\rm vol}\nolimits}
\providecommand{\tr}{\mathop{\rm tr}\nolimits}
\newtheorem{theorem}{Theorem}
\newtheorem{lemma}{Lemma}
\newtheorem{corollary}{Corollary}
\theoremstyle{definition}
\newtheorem*{rem}{Remark}
\newtheorem*{definition}{Definition}
\title{Harmonic Manifolds and Tubes}
\begin{document}
\author{Bal\'azs Csik\'os\thanks{The first author was supported by the National Research Development and Innovation Office (NKFIH) Grant No. ERC\_HU\_15 118286 and Grant No. OTKA K112703.}, E\"otv\"os Lor\'and University, Budapest, \\ 
    M\'arton Horv\'ath\thanks{The second author was supported by the National Research Development and Innovation Office (NKFIH) Grant No. OTKA K112703.  \newline\textit{2010
    		Mathematics Subject Classification:} Primary 53C25, Secondary
    	53B20.  \newline\textit{Keywords and phrases:} harmonic manifold, D'Atri space, Weyl's tube formula,  Steiner's formula, total mean curvature, total scalar curvature}, Budapest University of Technology and Economics, Budapest} \date{}
\maketitle
\begin{abstract}
The authors showed \cite{tube1} that in a connected locally harmonic manifold, the volume of a tube of small radius about a regularly parameterized simple arc depends  only on the length of the arc and the radius. In this paper, we show that this property characterizes harmonic manifolds even if it is assumed only for tubes about geodesic segments. As a consequence, we obtain similar characterizations of harmonic manifolds in terms  of the total mean curvature and the total scalar curvature of tubular hypersurfaces about curves. We find simple formulae expressing the volume, total mean curvature, and total scalar curvature of tubular hypersurfaces about a curve in a harmonic manifold as a function of the volume density function.
\end{abstract}

\maketitle
\section{Introduction}
Locally harmonic manifolds were introduced by E.~T.~Copson and H.~S.~Ruse \cite{Copson_Ruse} as Riemannian manifolds admitting a non-constant harmonic function in a punctured neighborhood of any point $p$ which depends only on the distance of the variable point from $p$. They showed that locally harmonic manifolds have constant curvature in dimensions $2$ and $3$. In 1944 A.~Lichnerowicz \cite{Lichnerowicz} conjectured that locally harmonic manifolds of dimension $4$ are necessarily locally symmetric spaces and posed the question whether this holds in higher dimensions as well. Locally harmonic manifolds were classified within the family of symmetric spaces by  A.~J.~Ledger \cite{Ledger}, who showed that a locally symmetric space is locally harmonic if and only if it is flat or has rank one.  The Lichnerowicz conjecture was proved by  A.~G.~Walker \cite{Walker} in dimension $4$, and  by Y.~Nikolayevsky \cite{Nikolayevsky} in dimension $5$.  
Z. I. Szab\'o \cite{Szabo} proved the Lichnerowicz conjecture for manifolds having compact universal covering space. By a result of A.-C. Allamigeon \cite{Allamigeon},  a complete simply connected harmonic manifold is either compact or it has no conjugate points along the geodesic curves. G.~Knieper \cite{Knieper} proved that if a compact harmonic manifold has no conjugate points, then it is either flat or a rank one symmetric space, completing the proof of the Lichnerowicz conjecture for all compact harmonic manifolds. As for the non-compact case, the answer to the question of Lichnerowicz is negative in infinitely many dimensions starting at $7$.  E.~Damek and F.~Ricci \cite{Damek_Ricci} noticed that certain solvable extensions of some Heisenberg-type Lie groups  become globally harmonic manifolds if we choose a suitable left invariant Riemannian metric on them, but they happen to be symmetric only if the used Heisenberg-type group has a center of dimension $1$, $3$ or $7$. We refer to the book \cite{DamekRicci} by J.~Berndt, F.~Tricerri, and L.~Vanhecke for more details on Damek--Ricci spaces. At present harmonic symmetric spaces and Damek--Ricci spaces are the only known examples of harmonic manifolds. In 2006 J.~Heber \cite{Heber} showed that a simply connected homogeneous harmonic manifold is either flat, or a rank one symmetric space, or a Damek--Ricci space. The existence of non-homogeneous harmonic manifolds is still an open problem.

There are several notions of harmonicity: infinitesimal, local, global and strong harmonicity, each of which implies the preceding one. As it is remarked by Z.~I.~Szab\'o \cite{Szabo}, infinitesimal and local harmonicity are equivalent properties and they are also equivalent to global harmonicity if the manifold is complete. D.~Michel \cite{Michel} showed that a complete simply connected Riemannian manifold is globally harmonic if and only if it is strongly harmonic. In the rest of the paper, we shall use the term \emph{harmonic manifold} as a short name for locally harmonic manifolds. All manifolds are assumed to be connected, smooth, and of dimension at least $2$.

There are many characterizations of harmonic manifolds. We refer to \cite[Section 2.6]{DamekRicci} for a list of the most important ones. E.~T.~Copson and H.~S.~Ruse \cite{Copson_Ruse} proved that local harmonicity holds if and only if small geodesic spheres have constant mean curvature. It was proved by Z.~I.~Szab\'o \cite{Szabo} that in a harmonic manifold, the volume of the intersection of two geodesic balls of small radii depends only on the radii and the distance between the centers. The authors \cite{Csikos_Horvath_2gomb, Csikos_Horvath} proved  that this property characterizes harmonic manifolds even if this property is assumed only for balls of the same radius.

The present paper deals with another geometrical characterization, related to the characterization by the volume of the intersection of geodesic balls. In 1939 H. Hotelling \cite{Hotelling} showed that in the $n$-dimensional Euclidean or spherical space, the volume of a tube of small radius about a curve depends only on the length of the curve and the radius. Hotelling's result was generalized in different directions.  H. Weyl \cite{Weyl} proved that the volume of a tube of small radius about a submanifold of a Euclidean or spherical space depends only on intrinsic invariants of the submanifold and the radius. A.~Gray and L.~Vanhecke \cite{Gray-Vanhecke} extended Hotelling's theorem to rank one symmetric spaces. Answering a question asked by G.~Thorbergsson, the authors \cite{tube1} showed  that Hotelling's theorem is also true in harmonic manifolds and it holds in a symmetric space if and only if the space is flat or has rank one. This result raised the conjecture that Hotelling's theorem is true in a Riemannian manifold if and only if the manifold is harmonic. 

Our main goal is to prove the missing `only if' part of this conjecture in a stronger form.  Namely, we prove that %the following theorem.
%\begin{theorem}\label{thm:main}
	if a Riemannian manifold has the property that the volume of a tube of small radius about a \emph{geodesic segment} depends only on the radius of the tube and the length of the geodesic, then the manifold is harmonic (Theorem \ref{thm:main}). 
%\end{theorem}

The above result poses the problem of finding a formula for the volume of tubes about curves in a harmonic manifold. Such formulae were computed by A.~Gray and L.~Vanhecke \cite{Gray-Vanhecke} in rank one symmetric spaces and by the authors \cite{tube1} in Damek--Ricci spaces. These computations make use of the underlying algebraic structures of symmetric and Damek--Ricci spaces, respectively, and do not give straightforward clue for the general case. However, the proof of our main theorem will provide a simple geometrical way to express the volume of tubes about curves in terms of the volume density function for all harmonic manifolds, generalizing the already known formulae.

The paper is structured as follows. In Section 2, we collect those definitions and facts that will be used in the rest of the paper. Section 3 is devoted to the proof of the main theorem. As a corollary of the theorem  and a byproduct of its proof, some further results were obtained. These include some characterizations of harmonic manifolds with the help of the volume, the total mean curvature, and the total scalar curvature of tubular hypersurfaces, collected in Section 4. We shall compute the listed intrinsic-volume-type invariants of tubular hypersurfaces for harmonic manifolds in terms of the volume density function. Our formula for the total mean curvature extends the results of A.~Gray and L.~Vanhecke \cite{Gray_Vanhecke_2} expressing the total mean curvature of tubes in rank one symmetric spaces. In Section 5, some novel characterizations of D'Atri spaces are discussed. 

\section{Preliminaries \label{sec:2}}

Let $(M,\langle\,,\rangle)$ be a Riemannian manifold. Denote by $\mathring{T}M\subseteq TM$ the domain of the exponential map of $M$, by $\exp\colon \mathring{T}M\to M$ the exponential map, and by $\exp_p \colon \mathring{T}_p M \to M$ the restriction of $\exp$ to $\mathring{T}_p M=T_pM\cap \mathring{T}M $. The injectivity radius at $p$ will be denoted by $\inj(p)$.

For $p\in M$ and $r>0$, we shall denote by $B_p(r)\subset T_pM$ and $S_p(r)\subset T_pM$ the closed ball and the sphere of radius $r$ centered at the origin $\mathbf 0_p\in T_pM$, respectively. The unit sphere $S_p(1)$ will be denoted simply by $S_p$. Denote by $SM=\bigcup_{p\in M}S_p\subset TM$ the total space of the unit sphere bundle of the tangent bundle. 

Associated to a non-zero tangent vector $\mathbf v\in T_pM\setminus\{\mathbf 0_p\}$, we shall consider the great subsphere
\[
S^{0}(\mathbf v)=\{\mathbf w\in T_pM \mid \langle \mathbf w,\mathbf v\rangle=0,\, \|\mathbf w\|=\|\mathbf v\|\}, 
\]
the hemisphere
\[
S^{+}(\mathbf v)=\{\mathbf w\in T_pM \mid \langle \mathbf w,\mathbf v\rangle\geq 0,\, \|\mathbf w\|=\|\mathbf v\|\}, 
\]
the ``disk''
\[
B^{0}(\mathbf v)=\{\mathbf w\in T_pM \mid \langle \mathbf w,\mathbf v\rangle=0,\, \|\mathbf w\|\leq\|\mathbf v\|\}, 
\]
and the half-ball
\[
B^{+}(\mathbf v)=\{\mathbf w\in T_pM \mid \langle \mathbf w,\mathbf v\rangle\geq 0,\, \|\mathbf w\|\leq\|\mathbf v\|\}. 
\]

When these sets are contained in the interior of $B_p(\inj(p))$, we can take their exponential images 
\[
\begin{matrix}
\mathcal B_p(r)=\exp(B_p(r)),&\quad&\mathcal B^0(\mathbf v)=\exp(B^0(\mathbf v)),&\quad&\mathcal B^+(\mathbf v)=\exp(B^+(\mathbf v)),
\\
\mathcal S_p(r)=\exp(S_p(r)),&\quad&\mathcal S^0(\mathbf v)=\exp(S^0(\mathbf v)),&\quad&\mathcal S^+(\mathbf v)=\exp(S^+(\mathbf v)).
\end{matrix}
\]
The sets $\mathcal B_p(r)$ and $\mathcal S_p(r)$ are the geodesic ball and sphere of radius $r$ centered at $p$, respectively. Analogously, the sets $\mathcal B^0(\mathbf v)$, $\mathcal B^+(\mathbf v)$, $\mathcal S^0(\mathbf v)$, and $\mathcal S^+(\mathbf v)$ will be called a geodesic disk, a geodesic half-ball, a geodesic great subsphere, and a geodesic hemisphere, respectively. 

Let $\omega_m=\pi^{m/2}/\Gamma(m/2+1)$ be the volume of an $m$-dimensional Euclidean unit ball.

\begin{definition}
	For a smooth injective regular curve $\gamma\colon[a,b]\to M$ and $r>0$, set 
	\[
	T(\gamma,r)=\{\mathbf v\in TM\mid \exists t\in[a,b]\text{ such that }\mathbf 
	v\in T_{\gamma(t)}M, \mathbf v\perp\gamma'(t),\text{ and }\|\mathbf v\|\leq 
	r\},\]
	and 
	\[
	P(\gamma,r)=\{\mathbf v\in TM\mid \exists t\in[a,b]\text{ such that }\mathbf 
	v\in T_{\gamma(t)}M, \mathbf v\perp\gamma'(t),\text{ and }\|\mathbf v\|=r\},\]
	Assume that $r$ is small enough to guarantee that the 
	exponential map is defined and injective on $T(\gamma,r)$. Then we define the 
	\emph{(solid) tube of radius $r$ about $\gamma$} by
	\[\mathcal T(\gamma,r)=\Exp(T(\gamma,r)),\]
	while the \emph{tubular hypersurface of radius $r$ about $\gamma$} is defined as 
	\[\mathcal P(\gamma,r)=\Exp(P(\gamma,r)).\]
\end{definition}

Denote the volume of $\mathcal T(\gamma,r)$ by $V_{\gamma}(r)$ and the $(n-1)$-dimensional volume of $\mathcal P(\gamma,r)$ by $A_{\gamma}(r)$. 

\begin{definition}
	We say that a Riemannian manifold has the \emph{tube property} if there is a function
	$v\colon [0,\infty)\to \mathbb R$ such that 
	\begin{equation}\label{eq:tube_density}
	V_{\gamma}(r)=v(r) l_{\gamma}
	\end{equation}
	for any smooth injective regular curve $\gamma\colon[a,b]\to M$ of length  $l_{\gamma}$  and 
	any sufficiently small $r$. 
	
	The manifold is said to have the \emph{tube property for geodesic curves} if equation \eqref{eq:tube_density} holds with a fixed function $v$ for all geodesic segments $\gamma$ and for any sufficiently small $r$. 
\end{definition}
The authors proved the following two theorems in \cite{tube1}.
\begin{theorem}\label{DAtri}
	A Riemannian manifold has the tube property if and only if it is a D'Atri space 
	and satisfies the tube property for geodesic curves.
\end{theorem}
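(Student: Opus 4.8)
The plan is to reduce everything to a single \emph{exact} integral formula for $V_\gamma(r)$, obtained from the Jacobian of the map $F(t,\mathbf v)=\exp_{\gamma(t)}(\mathbf v)$ with $\mathbf v\perp\gamma'(t)$, and then to read off both implications from the parity of that Jacobian. Assume $\gamma$ is unit speed (the general case follows since both sides scale with the length element), write $\mathbf v=\rho\,\mathbf u$ with $\|\mathbf u\|=1$, and along the geodesic $c(s)=\exp_{\gamma(t)}(s\mathbf u)$ express the columns of $dF$ as Jacobi fields: the radial field $c'$, the $n-2$ angular fields $Y_i$ with $Y_i(0)=\mathbf 0$ obtained by varying $\mathbf u$ on the unit normal sphere, and the field $Z$ obtained by varying $t$, which (extending $\mathbf u$ by normal-parallel transport) satisfies $Z(0)=\gamma'(t)$ and $Z'(0)=-\langle\mathbf u,\kappa(t)\rangle\gamma'(t)$, where $\kappa(t)=\nabla_{\gamma'}\gamma'$ is the geodesic curvature vector. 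The crucial point is the exact identity $Z=W-\langle\mathbf u,\kappa(t)\rangle J$, where $W,J$ are the Jacobi fields with $(W(0),W'(0))=(\gamma'(t),\mathbf 0)$ and $(J(0),J'(0))=(\mathbf 0,\gamma'(t))$; because Jacobi fields depend linearly on their initial data, the dependence of the whole Jacobian on $\kappa$ is \emph{strictly linear}. Writing $\theta_p(\mathbf v)$ for the volume density (the Jacobian of $\exp_p$) and using $\det(c',Y_1,\dots,Y_{n-2},J)=\rho^{n-1}\theta_{\gamma(t)}(\rho\mathbf u)$, the polar change of variables cancels $\rho^{n-2}$ and yields
\[
V_\gamma(r)=\int_a^b\!\int_{\|\mathbf u\|=1,\ \mathbf u\perp\gamma'(t)}\!\int_0^r\Big[D(t,\rho,\mathbf u)-\langle\mathbf u,\kappa(t)\rangle\,\rho^{n-1}\theta_{\gamma(t)}(\rho\mathbf u)\Big]\,d\rho\,d\sigma(\mathbf u)\,dt,
\]
where $D=\det(c',Y_1,\dots,Y_{n-2},W)$ is \emph{independent of $\kappa$}. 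For $r$ below the tube radius the bracket is positive, so the absolute value may be dropped.

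For the implication $(\Leftarrow)$ I would assume $M$ is D'Atri, so $\theta_p(\mathbf w)=\theta_p(-\mathbf w)$ for all $\mathbf w$. Then $\mathbf u\mapsto\langle\mathbf u,\kappa(t)\rangle\,\theta_{\gamma(t)}(\rho\mathbf u)$ is odd on the normal unit sphere, hence its integral over that symmetric sphere vanishes, the curvature term drops out, and $V_\gamma(r)=\int_a^b g(\gamma(t),\gamma'(t),r)\,dt$ with $g(p,\mathbf e,r)=\int_{\mathbf u\perp\mathbf e}\int_0^r D\,d\rho\,d\sigma$ depending only on the point, the unit direction and $r$. Applying this to geodesics ($\kappa\equiv0$), the tube property for geodesic curves gives $\int_\gamma g\,ds=v(r)\,l_\gamma$ for every geodesic segment; localizing to arbitrarily short geodesics through an arbitrary $p$ in an arbitrary direction $\mathbf e$ forces $g(p,\mathbf e,r)=v(r)$ for all $p,\mathbf e$. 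Substituting back, $V_\gamma(r)=v(r)\,l_\gamma$ for \emph{every} curve, i.e.\ $M$ has the tube property.

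For $(\Rightarrow)$, the tube property trivially includes the tube property for geodesics, which as above yields $g\equiv v(r)$. Feeding this into the displayed formula, the equality $V_\gamma(r)=v(r)l_\gamma$ forces the curvature term to integrate to zero along every curve; since through a given point one can prescribe both the tangent direction $\mathbf e$ and an arbitrary curvature vector $\kappa\perp\mathbf e$, a localization argument gives, for every $p$, every unit $\mathbf e$, and every small $\rho$, the first-moment condition
\[
\int_{\mathbf u\perp\mathbf e,\ \|\mathbf u\|=1}\mathbf u\,\theta_p(\rho\mathbf u)\,d\sigma(\mathbf u)=\mathbf 0 .
\]
The even part of $\theta_p$ contributes nothing (it pairs with the odd factor $\mathbf u$), so this says that the odd part of $\theta_p$, restricted to each geodesic sphere, has vanishing first moment over every great subsphere. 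One then shows that this great-subsphere first-moment transform is injective on odd functions, whence the odd part of $\theta_p$ vanishes and $M$ is D'Atri.

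I expect the two technical cruxes to be, first, the Jacobi-field computation behind the displayed formula --- especially the exact linearity in $\kappa$ via $Z=W-\langle\mathbf u,\kappa\rangle J$, which must be set up carefully with the normal connection --- and, second, the injectivity of the great-subsphere first-moment transform on odd functions. The latter, which is what upgrades the vanishing of the (a priori only leading) curvature correction to the full evenness of the density, is the step I anticipate to be the main obstacle; it can be handled by an $O(n)$-equivariant decomposition into spherical harmonics, checking that the transform acts by a nonzero scalar on each odd-degree component.
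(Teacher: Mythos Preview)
The present paper does not actually prove Theorem~\ref{DAtri}; it is quoted verbatim from the authors' earlier work \cite{tube1} (see the sentence ``The authors proved the following two theorems in \cite{tube1}'' immediately preceding the statement) and used only as an input for Theorem~\ref{thm:main}. So there is no proof in this paper against which to compare your proposal.

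That said, your argument is internally coherent and in the same circle of ideas as the Vanhecke--Willmore tube formula that the paper invokes in the proof of Lemma~\ref{ball_homogeneity}. The Jacobi-field identity $Z=W-\langle\mathbf u,\kappa(t)\rangle J$ (with $\mathbf u$ extended normal-parallel along $\gamma$) is correct and does make the tube Jacobian exactly affine in $\kappa$, with the $\kappa$-coefficient equal to $\rho^{n-1}\theta_{\gamma(t)}(\rho\mathbf u)$; the $(\Leftarrow)$ direction then drops out from the parity of $\theta$ exactly as you describe. For $(\Rightarrow)$ your localization correctly produces the great-subsphere first-moment condition
\[
\int_{\mathbb S^{n-2}_{\mathbf e}}\mathbf u\,\theta_p(r\mathbf u)\,d\sigma(\mathbf u)=\mathbf 0\qquad\text{for all unit }\mathbf e,
\]
and the crux, as you say, is the injectivity of this transform on odd functions. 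One point to tighten: this transform sends scalar functions to \emph{tangent vector fields} on $\mathbb S^{n-1}$ (the output automatically lies in $\mathbf e^\perp$), so the phrase ``acts by a nonzero scalar on each odd-degree component'' is not literally meaningful. A clean way to run the equivariance argument is to observe that each $\mathcal H_{2k+1}$ is an irreducible $O(n)$-module, so by Schur's lemma the restriction of the transform to $\mathcal H_{2k+1}$ is either zero or injective, and the images for distinct odd degrees land in distinct isotypic components of the target; it then suffices to check non-vanishing on a single zonal harmonic in each odd degree, which is a Gegenbauer-coefficient computation.
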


\begin{theorem}\label{thm:harmonic}
	Every connected harmonic manifold has the tube property.
\end{theorem}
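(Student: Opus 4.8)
\emph{Proof strategy.} The plan is to combine the volume-density characterization of harmonicity with Theorem~\ref{DAtri}. Recall that $(M,\langle\,,\rangle)$ is harmonic exactly when, for every $p$, the Jacobian $\theta_p(\mathbf v)=\det d_{\mathbf v}\exp_p$ of the exponential map (measured against the Euclidean volume of $T_pM$) is a function of $\|\mathbf v\|$ alone; on a connected harmonic manifold this is one and the same function $\theta(\rho)$ at every point. Two things follow at once. First, $\theta_p$ is even in $\mathbf v$, so every geodesic symmetry preserves volume and $M$ is a D'Atri space. Hence, by Theorem~\ref{DAtri}, it will suffice to verify the tube property for geodesic segments. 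This reduction is genuinely useful: along a geodesic the foot-point variation field introduced below satisfies a pure Neumann initial condition, because the curvature vector of the curve vanishes and a parallel normal frame can be used.

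Next I would set up Fermi coordinates along a unit-speed geodesic $\gamma\colon[0,L]\to M$. Fixing a parallel orthonormal frame $E_1=\gamma',E_2,\dots,E_n$ and writing the tube map $\Phi(t,s,\mathbf u)=\exp_{\gamma(t)}(s\mathbf u)$ with $\mathbf u\in S^{0}(\gamma'(t))$, the fibre of the solid tube over $\gamma(t)$ is precisely the geodesic disk $\mathcal B^{0}(r\gamma'(t))$, so that $\mathcal T(\gamma,r)=\bigcup_t\mathcal B^{0}(r\gamma'(t))$ is swept by these disks. For small $r$ the map is an embedding and the sweep is transversal, so the rate of volume sweep is the flux of the sweep velocity through each disk:
\[
V_\gamma(r)=\int_0^L F(t)\,\ud t,\qquad F(t)=\int_{\mathcal B^{0}(r\gamma'(t))}\langle Y,\mathbf n\rangle\,\ud A ,
\]
where $\mathbf n$ is the unit normal of the disk and $Y(s)=\partial_t\Phi$ is the Jacobi field along $s\mapsto\exp_{\gamma(t)}(s\mathbf u)$ with $Y(0)=\gamma'(t)$ and, since the frame is parallel and $\gamma$ is a geodesic, $Y'(0)=\nabla_{\gamma'}\mathbf u=0$. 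The whole problem now reduces to showing that the flux $F(t)$ is a function of $r$ alone; setting $v(r):=F$ then gives $V_\gamma(r)=v(r)\,l_\gamma$, which is the tube property for geodesics.

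To evaluate $F(t)$ I would compare the integrand with the radial density. Written in the parallel frame, the flux density $\langle Y,\mathbf n\rangle\,\ud A$ is exactly the tube Jacobian $\det[\partial_s\Phi,\,J_2,\dots,J_{n-1},\,Y]$, whose first $n-1$ columns (the radial field and the spherical Jacobi fields $J_i$, with $J_i(0)=0$, $J_i'(0)\perp\mathbf u$) are the very columns producing the Jacobian of $\exp_{\gamma(t)}$, namely the radial density $\theta(s)$. The only difference is that the last column is the Neumann field $Y$ in place of the Dirichlet field $\widetilde J$ (with $\widetilde J(0)=0$, $\widetilde J'(0)=\gamma'$) that occurs in $\theta$. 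Since the first $n-1$ columns span a fixed hyperplane with unit normal $N(s)$ along the perpendicular geodesic, both determinants see only the $N$-component of the last column, so the tube Jacobian equals $\theta(s)\,\langle Y,N\rangle/\langle\widetilde J,N\rangle$, and $F(t)$ is the integral of this over $s\in[0,r]$ and over the equatorial sphere $\mathbf u\in S^{0}(\gamma'(t))$.

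The hard part, and the crux of the argument, will be to show that this integral is independent of the direction $\mathbf u$ (after averaging over the equatorial sphere) and of the foot point $\gamma(t)$. Radiality of $\theta$ controls only the \emph{product} of the Jacobi fields, not the individual components $\langle Y,N\rangle$ and $\langle\widetilde J,N\rangle$, which need not be isotropic in a general harmonic manifold. I would attack this by pairing $Y$ and $\widetilde J$ through their constant Wronskian $\langle\widetilde J',Y\rangle-\langle\widetilde J,Y'\rangle\equiv\langle\gamma',\gamma'\rangle=1$, and by exploiting two further faces of harmonicity: that geodesic spheres have constant mean curvature (so the trace of the radial shape operator is the universal function $\theta'/\theta$) and that $M$ is Einstein. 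These should allow one to rewrite the equatorial average of $\langle Y,N\rangle$ purely in terms of $\theta$ and its derivatives, yielding $F(t)=v(r)$ with an explicit $v$ expressed through the volume density function --- which is simultaneously the density formula for tube volumes promised in the introduction. Establishing this isotropy-after-averaging identity, rather than the bookkeeping of the Jacobian, is where I expect the real difficulty to lie.
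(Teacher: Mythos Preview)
The paper does not prove Theorem~\ref{thm:harmonic} here; it is quoted from \cite{tube1}. However, the argument in Section~3 culminating in the Corollary (and summarised in Remark~\ref{rem:proof_reduction}) contains an independent route to the same conclusion, and that is what your proposal should be compared against.

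Your setup is essentially the same as the paper's: reduce via Theorem~\ref{DAtri} to geodesic segments (harmonic $\Rightarrow$ D'Atri is immediate from radiality of $\theta$), sweep the tube by the geodesic disks $\mathcal B^{0}(r\gamma'(t))$, and recognise that the rate of sweep is the flux of the Jacobi field $Y$ (with $Y(0)=\gamma'(t)$, $Y'(0)=\mathbf 0$) through the disk. So far so good, and your Jacobian bookkeeping is correct linear algebra. The gap is precisely where you locate it: you have not shown that the equatorial average of $\langle Y,N\rangle/\langle\widetilde J,N\rangle$ depends only on $r$, and the tools you list (the Wronskian identity, constancy of $\tr L$, the Einstein condition) do not obviously close this. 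Radiality of $\theta$ controls the determinant of the full Jacobi matrix, not the component of a single Neumann field in a direction picked out by the remaining columns; the Wronskian gives one scalar relation between $Y$ and $\widetilde J$, which is not enough to pin down that ratio, even after averaging over $S^{0}(\gamma'(t))$.

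The idea that dissolves this difficulty --- and which your proposal is missing --- is to stop computing the flux through the disk and compute it through the \emph{hemisphere} instead. In a harmonic manifold the half-ball volume $\vol_n(\mathcal B^{+}(r\gamma'(t)))$ is exactly half the ball volume, hence a function of $r$ alone; therefore its $t$-derivative vanishes, and the divergence theorem (equation~\eqref{eq:felgombvar} without the error term) gives
\[
\int_{\mathcal B^{0}(r\mathbf u)}\langle Y,-N\rangle\,\ud\sigma=\int_{\mathcal S^{+}(r\mathbf u)}\langle Y,N\rangle\,\ud\sigma .
\]
On the hemisphere the outward normal is radial by the Gauss lemma, and the radial component of the Jacobi field is the elementary constant $\langle Y(r),\gamma'_{\mathbf v}(r)\rangle=\langle\mathbf u,\mathbf v\rangle$ (equation~\eqref{eq:Jacobi_Gauss}). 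Hence the flux is
\[
v(r)=\int_{S^{+}(\mathbf u)}\langle\mathbf u,\mathbf v\rangle\,r^{n-1}\theta(r\mathbf v)\,\ud\mathbf v=\tfrac12 r^{n-1}\underline\theta(r)\,\CT(1)(\mathbf u)=\omega_{n-1}r^{n-1}\underline\theta(r),
\]
which is manifestly independent of $\mathbf u$ and of the foot point. This is the content of equations~\eqref{eq:bullet1}--\eqref{eq:bullet2} and the Corollary. Your direct attack on the disk flux can perhaps be pushed through, but the half-ball trick replaces an anisotropic shape-operator computation by a one-line application of the Gauss lemma.
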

Our goal is to strengthen these two theorems as follows.
\begin{theorem}\label{thm:main} The following properties are equivalent for a connected Riemannian manifold:
	\begin{itemize}
		\item[\emph{(i)}] the manifold has the tube property;
		\item[\emph{(ii)}] the manifold has the tube property for geodesic curves;
		\item[\emph{(iii)}] the manifold is harmonic.
	\end{itemize}
\end{theorem}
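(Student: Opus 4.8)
The plan is to prove the cycle of implications (iii)$\Rightarrow$(i)$\Rightarrow$(ii)$\Rightarrow$(iii). The implication (iii)$\Rightarrow$(i) is precisely Theorem \ref{thm:harmonic}, and (i)$\Rightarrow$(ii) is immediate, since geodesic segments form a subfamily of the smooth injective regular curves and \eqref{eq:tube_density} for the former is a special case of the property for the latter. Thus the whole content of the theorem lies in (ii)$\Rightarrow$(iii): the tube property for geodesic curves must force harmonicity. (Once this is established, harmonic manifolds are D'Atri, so by Theorem \ref{DAtri} one even recovers the full tube property, which closes the cycle consistently.)

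First I would reduce the tube volume to a cross-sectional integral. Fix a unit-speed geodesic $\gamma\colon[a,b]\to M$, choose a parallel orthonormal frame $E_2,\dots,E_n$ of its normal bundle, and use the Fermi map $\Phi(t,\mathbf x)=\exp_{\gamma(t)}\!\bigl(\sum_i x_iE_i(t)\bigr)$ as coordinates on the tube. Writing $\theta^{F}$ for the Jacobian of $\Phi$, the definition of $\mathcal T(\gamma,r)$ gives $V_\gamma(r)=\int_a^b A(\gamma(t),\gamma'(t),r)\,\mathrm dt$, where
\[
A(q,\mathbf u,r)=\int_{\mathbf v\perp\mathbf u,\ \|\mathbf v\|\le r}\theta^{F}_{q,\mathbf u}(\mathbf v)\,\mathrm d\mathbf v .
\]
The crucial point is that, because $\gamma$ is a geodesic (zero acceleration) and the frame is parallel, the longitudinal variation $\partial_t\Phi$ is the Jacobi field with initial value $\mathbf u$ and initial derivative $0$, so $A$ depends only on the $1$-jet $(q,\mathbf u)=(\gamma(t),\gamma'(t))\in SM$ and not on the rest of $\gamma$. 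Now I apply (ii): since $V_\gamma(r)=v(r)(b-a)=v(r)\int_a^b\mathrm dt$ for every geodesic segment, differentiating in the upper limit $b$ yields $A(\gamma(b),\gamma'(b),r)=v(r)$. As $(\gamma(b),\gamma'(b))$ ranges over all of $SM$, I conclude that $A(q,\mathbf u,r)=v(r)$ is independent of $(q,\mathbf u)$ for every sufficiently small $r$.

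It remains to show that this directional independence forces the volume density $\theta_q(\mathbf w)=|\det(\mathrm d\exp_q)_{\mathbf w}|$ to be radial, one of the standard characterizations of harmonicity. I would do this through the Jacobi-field description of $\theta^F$. Let $\mathcal A,\mathcal B$ be the matrix solutions of $\mathcal Y''+\mathcal R\mathcal Y=0$ along $s\mapsto\exp_q(s\mathbf v)$ with $\mathcal A(0)=0,\ \mathcal A'(0)=I$ and $\mathcal B(0)=I,\ \mathcal B'(0)=0$; then $\theta_q(\mathbf v)=|\det\mathcal A(1)|$, while replacing the single column coming from the longitudinal variation (which is $\mathcal B(1)\mathbf u$ rather than an $\mathcal A$-column) gives the clean identity
\[
\theta^{F}_{q,\mathbf u}(\mathbf v)=\theta_q(\mathbf v)\,\bigl\langle \mathcal A(1)^{-1}\mathcal B(1)\,\mathbf u,\ \mathbf u\bigr\rangle .
\]
Substituting this into $A(q,\mathbf u,r)\equiv v(r)$ and expanding in powers of $r$ turns the condition into a sequence of identities for curvature invariants at $(q,\mathbf u)$. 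The coefficient of $r^{n+1}$ reduces to $\mathrm{Ric}(\mathbf u,\mathbf u)+\tau(q)=\mathrm{const}$ on $SM$; averaging over $\mathbf u$ and then varying $\mathbf u$ shows that $M$ is Einstein with constant scalar curvature, the first Ledger condition.

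The main obstacle, and the technical heart of the proof, is to push this through all orders. One must show that the full family of conditions produced by $A(q,\mathbf u,r)\equiv v(r)$ is equivalent to the complete set of Ledger conditions characterizing harmonic manifolds, or equivalently (by Copson--Ruse) to the constancy of the mean curvature of small geodesic spheres. I expect to control the higher-order terms by analyzing the radial behaviour of $\mathcal A(s)^{-1}\mathcal B(s)$ and feeding the directional independence of $A$ back into the volume density order by order, establishing that $\theta_q(\mathbf w)$ depends only on $\|\mathbf w\|$; this radiality is exactly harmonicity and simultaneously yields the promised expression of $v(r)$ in terms of the volume density function. Managing the Fermi density beyond the leading curvature term, and verifying that no directional information survives at any order, is where the difficulty concentrates.
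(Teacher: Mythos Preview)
Your reduction to the cross-sectional identity $A(q,\mathbf u,r)=v(r)$ for all $(q,\mathbf u)\in SM$ is correct and is essentially the content of the paper's equation \eqref{eq:hbh1}, which reads
\[
v'(r)=-\int_{S^0(\mathbf u)}\langle L_{\iota(r\mathbf v)}\mathbf u,\mathbf u\rangle\, r^{n-1}\theta(r\mathbf v)\,\ud\mathbf v;
\]
your factor $\langle\mathcal A^{-1}\mathcal B\,\mathbf u,\mathbf u\rangle$ is equivalent to the shape-operator quadratic form appearing here. So far so good.

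The genuine gap is the passage from this identity to radiality of $\theta$. You do not carry it out; you say only that you ``expect to control the higher-order terms'' and that ``managing the Fermi density beyond the leading curvature term \dots\ is where the difficulty concentrates.'' That is an honest acknowledgement that the proof is incomplete, not a proof. The obstruction is real: the integrand is $\theta(r\mathbf v)$ times a curvature-dependent quadratic form in $\mathbf u$, integrated over the equatorial sphere $S^0(\mathbf u)$; there is no evident injective integral transform here that would let you read off $\theta$ directly, and an order-by-order expansion produces, at each stage, a mixture of the Taylor coefficients of $\theta$ with derivatives of curvature via the expansion of $\mathcal A^{-1}\mathcal B$. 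Nothing in your outline explains how to disentangle these.

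The paper supplies exactly the missing idea, and it is not the one you sketch. Working inductively (assuming the coefficients $a_0,\dots,a_{2k}$ of $\theta$ are already constant), the paper introduces the isotopy $\Phi_t=\exp_{\gamma(t)}\circ\Pi_{t\mathbf u}\circ\exp_p^{-1}$ and computes the first variation of the volume of the geodesic \emph{half-ball} $\mathcal B^+(r\mathbf u)$. One boundary piece of this variation is the flux through the equatorial disk, which equals $v(r)$ since the disks sweep out the tube; the other is the flux through the hemisphere $\mathcal S^+(r\mathbf u)$. The Gauss lemma makes the normal component of the variation field on the hemisphere equal to the \emph{constant} $\langle\mathbf u,\mathbf v\rangle$, so one obtains
\[
v(r)=\int_{S^+(\mathbf u)}\langle\mathbf u,\mathbf v\rangle\,\theta(r\mathbf v)\,r^{n-1}\,\ud\mathbf v+O(r^{2k+n+3}).
\]
This is a cosine-transform identity, and the injectivity of the cosine transform on even functions forces $a_{2k+2}$ to be constant, closing the induction. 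Several auxiliary lemmas are needed to control the error term (Vanhecke's lemma for the odd coefficients, an approximate D'Atri symmetry for $h$, and a half-ball volume estimate), none of which appear in your outline. Your Fermi-density route, as written, does not reach this cosine-transform structure, and without it the argument does not close.
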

Implication (i)$\Rightarrow$(ii) is trivial, (iii)$\Rightarrow$(i) is just Theorem \ref{thm:harmonic}, (ii)$\Rightarrow$(iii) will be proved in the next section. We note that our proof can be simplified substantially if we want to show only the implication (i)$\Rightarrow$(iii). Details of how to adapt our proof to obtain this weaker statement will be given in the remark at the end of Section 5.

The proof will use the characterization of harmonic manifolds with the help of the volume density function.
By definition, the volume density function $\theta\colon \mathring{T}M\to \mathbb R$ assigns to a tangent vector $\mathbf v\in \mathring{T}_p M$, the volume stretch factor of the derivative map $T_{\mathbf v}\exp_p\colon T_\mathbf v(T_pM)\to T_{\exp(\mathbf v)}M$. If $\mathbf e_1,\dots,\mathbf e_n$ is an orthonormal basis of $T_\mathbf v(T_pM)\cong T_pM$ with respect to the Euclidean structure $\langle\,,\rangle_p$, then $\theta(\mathbf v)=\sqrt{\det g}$, where $g$ is the $n\times n$ matrix with entries $g_{ij}=\langle T_{\mathbf v}\exp_p(\mathbf e_i),T_{\mathbf v}\exp_p(\mathbf e_j)\rangle$.

A Riemannian manifold is known to be harmonic if and only if the volume density function is radial, which means that $\theta(\mathbf v)$ depends only on $\|\mathbf v \|$ (see \cite[Section 2.6]{DamekRicci}).

For a tangent vector $\mathbf v\in TM$, $\gamma_{\mathbf v}$ will denote the maximal geodesic curve with initial velocity $\gamma_{\mathbf v}'(0)=\mathbf v$. 

The \emph{canonical geodesic involution} is the involutive diffeomorphism $\iota\colon \mathring{T}M\to \mathring{T}M$ defined by \break $\iota(\mathbf v)=-\gamma_{\mathbf v}'(1)$. 

\begin{theorem}[{\cite[Lemma 6.12]{Besse}}] \label{thm:volume_symmetry}The volume density function has the symmetry $\theta\circ\iota=\theta$.
\end{theorem}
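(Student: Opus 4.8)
The plan is to express the volume density through Jacobi fields and then reduce the claimed symmetry to the constancy of a matrix Wronskian. Write $q=\exp(\mathbf v)$ and let $c(t)=\exp_p(t\mathbf v)$, $t\in[0,1]$, be the geodesic from $p=c(0)$ to $q=c(1)$, so that $\iota(\mathbf v)=-c'(1)\in T_qM$ with $\|\iota(\mathbf v)\|=\|\mathbf v\|$ and $\exp_q(\iota(\mathbf v))=p$. Fix a parallel orthonormal frame $E_1,\dots,E_n$ along $c$, let $R(t)$ denote the curvature endomorphism $\mathbf w\mapsto R(\mathbf w,c'(t))c'(t)$ written in this frame, and represent Jacobi fields along $c$ by their coordinate columns, so that the Jacobi equation reads $X''+R(t)X=0$. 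First I would recall the standard fact that $T_{\mathbf v}\exp_p(\mathbf w)=J_{\mathbf w}(1)$, where $J_{\mathbf w}$ is the Jacobi field along $c$ with $J_{\mathbf w}(0)=\mathbf 0$ and $J_{\mathbf w}'(0)=\mathbf w$. Since $E_1(0),\dots,E_n(0)$ and $E_1(1),\dots,E_n(1)$ are orthonormal bases of $T_pM$ and $T_qM$, the matrix of $T_{\mathbf v}\exp_p$ in these bases is exactly $\mathcal A(1)$, where $\mathcal A$ is the matrix solution of $X''+RX=0$ with $\mathcal A(0)=0$ and $\mathcal A'(0)=I$; hence $\theta(\mathbf v)=\sqrt{\det\big(\mathcal A(1)^{\!\top}\mathcal A(1)\big)}=|\det \mathcal A(1)|$.

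Next I would compute $\theta(\iota(\mathbf v))$ along the same geodesic traversed backwards. The geodesic issuing from $q$ with initial velocity $\iota(\mathbf v)=-c'(1)$ is $\tilde c(t)=c(1-t)$, and $\tilde E_i(t)=E_i(1-t)$ is a parallel orthonormal frame along it. Tracking how a Jacobi field and its covariant derivative transform under the substitution $t\mapsto 1-t$ (the covariant derivative picks up a single sign, which is the only delicate bookkeeping in the argument), one checks that the initial-value problem defining the density at $\iota(\mathbf v)$ turns into a terminal-value problem along $c$. Concretely, one finds $\theta(\iota(\mathbf v))=|\det \mathcal B(0)|$, where $\mathcal B$ is the matrix solution of the same equation $X''+RX=0$ along $c$ but with the terminal conditions $\mathcal B(1)=0$ and $\mathcal B'(1)=I$.

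Finally I would invoke the matrix Wronskian. Because $R(t)$ is self-adjoint, direct differentiation shows that $W(t):=\mathcal A(t)^{\!\top}\mathcal B'(t)-\mathcal A'(t)^{\!\top}\mathcal B(t)$ is constant in $t$. Evaluating at $t=0$ gives $W=-\mathcal B(0)$, while evaluating at $t=1$ gives $W=\mathcal A(1)^{\!\top}$; hence $\mathcal B(0)=-\mathcal A(1)^{\!\top}$ and therefore $|\det\mathcal B(0)|=|\det\mathcal A(1)|$. Combining this with the two expressions for the densities yields $\theta(\iota(\mathbf v))=\theta(\mathbf v)$, as claimed. The main obstacle is not the Wronskian identity, which is immediate once $R$ is known to be symmetric, but the frame-and-orientation bookkeeping in the second step: one must verify that reversing the geodesic converts the initial-condition problem for $\mathcal A$ into precisely the terminal-condition problem for $\mathcal B$, with the tangent spaces $T_pM$ and $T_qM$ correctly identified by parallel transport along $c$.
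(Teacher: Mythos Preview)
The paper does not supply its own proof of this theorem; it merely quotes it as \cite[Lemma 6.12]{Besse}. Your argument is correct and is, in fact, essentially the proof one finds in the cited reference: express $\theta$ via the endpoint value of the Jacobi matrix $\mathcal A$ with initial data $(\mathcal A(0),\mathcal A'(0))=(0,I)$, identify $\theta\circ\iota$ with the initial value of the Jacobi matrix $\mathcal B$ with terminal data $(\mathcal B(1),\mathcal B'(1))=(0,I)$, and compare the two via the constant Wronskian $\mathcal A^{\!\top}\mathcal B'-\mathcal A'^{\!\top}\mathcal B$, whose constancy uses only the symmetry of the curvature endomorphism $R(t)$.

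One small remark on the bookkeeping you flag yourself: reversing the geodesic literally produces the terminal condition $\mathcal B'(1)=-I$ rather than $+I$, since the covariant derivative of $J(1-t)$ along $\tilde c$ is $-J'(1-t)$. This is harmless, because replacing $\mathcal B$ by $-\mathcal B$ leaves $|\det\mathcal B(0)|$ unchanged, and the Wronskian computation goes through verbatim with the adjusted sign. It would be worth saying this explicitly rather than leaving it inside the parenthetical about ``a single sign''.
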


The geodesic spray $\mathbf G\colon TM\to T(TM)$ is the vector field on the tangent bundle that generates the geodesic flow. Its value at $\mathbf v\in TM$ is the tangent vector $\mathbf G_{\mathbf v}=\frac{\ud \gamma'_{\mathbf v}(t)}{\ud t}\big|_{t=0}\in T_{\mathbf v}(TM)$. In particular, the derivative of a smooth function $f\colon TM\to \mathbb R$ with respect to the geodesic spray is given by the formula $\mathbf G_{\mathbf v}f=\frac{\ud f(\gamma'_{\mathbf v}(t))}{\ud t}\big|_{t=0}$.

The Liouville vector field $\mathbf X$ is the natural vector field on $TM$ generating the flow $\Phi_t(\mathbf v)=e^t\mathbf v$.  Denote by $\bar r\colon TM\to \mathbb R$ the function $\bar r(\mathbf v)=\|\mathbf v\|$. The normalization $\mathbf X/\bar r$ of $\mathbf X$, defined on the complement of the zero section of $TM$, will be denoted by $\partial_r$. The derivative $\partial_r f$ of a smooth function $f\colon U\to \mathbb R$ defined on an open subset $U$ of $TM$ is called the radial derivative of $f$. Its value at a nonzero tangent vector $\mathbf v\in U$ is $\partial_r f(\mathbf v)=\frac{\ud}{\ud t}f((1+t/\|\mathbf v\|)\mathbf v)|_{t=0}$.

The following lemma shows that invariance under the canonical geodesic involution is not inherited by the radial derivative of a function.
\begin{lemma} \label{le:iota_inv}If the smooth function $f$ is defined on a $\iota$-invariant open subset of $\mathring TM$ and $f\circ \iota=f$, then $(\partial_r f)\circ\iota=\partial_r f-\mathbf Gf/\bar r$. In particular, $\partial_r f$ is $\iota$-invariant if and only if $f$ is a first integral of the geodesic flow.
\end{lemma}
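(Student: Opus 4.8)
The plan is to verify the identity by a direct computation of $(\partial_r f)(\iota(\mathbf v))$ at an arbitrary nonzero $\mathbf v\in\mathring{T}M$, converting everything into one-variable derivatives taken along the geodesic $\gamma=\gamma_{\mathbf v}$. Writing $\gamma(t)=\exp(t\mathbf v)$, so that $\mathbf v=\gamma'(0)$ and $\iota(\mathbf v)=-\gamma'(1)$, I first record that geodesics have constant speed, whence $\|\iota(\mathbf v)\|=\|\mathbf v\|=\bar r(\mathbf v)$; this lets the normalizing factor be handled cleanly. Starting from the definition $\partial_r f(\mathbf w)=\frac{1}{\|\mathbf w\|}\frac{\ud}{\ud u}\big|_{u=1}f(u\mathbf w)$, I obtain
\[
(\partial_r f)(\iota(\mathbf v))=\frac{1}{\bar r(\mathbf v)}\frac{\ud}{\ud u}\Big|_{u=1}f\big(u\,\iota(\mathbf v)\big).
\]

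The decisive step is to exploit the hypothesis $f\circ\iota=f$ to rewrite $f(u\,\iota(\mathbf v))=f\big(\iota(u\,\iota(\mathbf v))\big)$ and then evaluate $\iota(u\,\iota(\mathbf v))$ explicitly. Here I would invoke two elementary facts about geodesics: the scaling relation $\gamma_{c\mathbf w}(t)=\gamma_{\mathbf w}(ct)$, and that $\iota(\mathbf v)=-\gamma'(1)$ is the initial velocity of the time-reversed geodesic $t\mapsto\gamma(1-t)$. Combining them gives $\gamma_{u\iota(\mathbf v)}(t)=\gamma(1-ut)$, hence $\gamma_{u\iota(\mathbf v)}'(1)=-u\,\gamma'(1-u)$, and therefore
\[
\iota\big(u\,\iota(\mathbf v)\big)=-\gamma_{u\iota(\mathbf v)}'(1)=u\,\gamma'(1-u).
\]
Thus $f(u\,\iota(\mathbf v))=f\big(u\,\gamma'(1-u)\big)$, a function in which $u$ enters in two distinct ways: through the radial scaling factor $u$ and through the geodesic parameter $1-u$.

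Differentiating $u\mapsto f\big(u\,\gamma'(1-u)\big)$ at $u=1$ by the chain rule produces precisely the two terms in the statement. The scaling dependence contributes $\frac{\ud}{\ud a}\big|_{a=1}f(a\mathbf v)=\bar r(\mathbf v)\,\partial_r f(\mathbf v)$, since $\gamma'(0)=\mathbf v$; the geodesic-parameter dependence contributes $-\frac{\ud}{\ud t}\big|_{t=0}f(\gamma'(t))=-\mathbf Gf(\mathbf v)$ by the very definition of the geodesic spray, the minus sign coming from the reversal $t\mapsto 1-t$. Dividing by $\bar r(\mathbf v)$ yields $(\partial_r f)(\iota(\mathbf v))=\partial_r f(\mathbf v)-\mathbf Gf(\mathbf v)/\bar r(\mathbf v)$, as claimed. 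The ``in particular'' clause is then immediate: $\partial_r f$ is $\iota$-invariant if and only if $\mathbf Gf/\bar r$ vanishes identically, that is, if and only if $\mathbf Gf\equiv 0$, which is exactly the statement that $f$ is a first integral of the geodesic flow.

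I expect the only real difficulty to be the careful bookkeeping of the composite differentiation — isolating the two channels through which $u$ enters $u\,\gamma'(1-u)$ and tracking the sign produced by the time-reversal. No injectivity-radius, convexity, or curvature input is needed beyond constant speed and the scaling law, so the result is in essence a one-line identity dressed in the chain rule, and the chief risk is a sign slip in the reversed-geodesic computation of $\iota(u\,\iota(\mathbf v))$.
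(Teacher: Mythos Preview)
Your proof is correct and follows essentially the same approach as the paper: both compute $(\partial_r f)(\iota(\mathbf v))$ by using $f\circ\iota=f$ to pull $\iota$ inside, explicitly evaluate $\iota$ of the radially perturbed vector via the geodesic scaling law (obtaining an expression of the form $(\text{scale})\cdot\gamma'(\text{shifted time})$), and then differentiate to separate the radial and geodesic-flow contributions. The only differences are cosmetic---the paper writes $\mathbf v=r\mathbf u$ with $\mathbf u\in SM$ and perturbs via $r\mapsto r+\rho$ rather than your multiplicative $u$---but the computation is the same.
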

\begin{proof} If $\mathbf u\in SM$, and the non-zero tangent vector ${r}\mathbf u$ is in the domain of $f$, then 
\begin{align*}
(\partial_r f)\circ\iota({r}\mathbf u)&=(\partial_r f)(-{r}\gamma_{\mathbf u}'({r}))=\frac{\ud }{\ud \rho}f(-({r}+\rho)\gamma'_{\mathbf u}({r}))\big|_{\rho=0}=\frac{\ud }{\ud \rho}f\circ\iota(-({r}+\rho)\gamma'_{\mathbf u}({r}))\big|_{\rho=0}\\&=\frac{\ud }{\ud \rho}f(({r}+\rho)\gamma'_{\mathbf u}(-\rho))\big|_{\rho=0}=\partial_r f({r}\mathbf u)-\frac{\mathbf G_{{r}\mathbf u}f}{{r}}.\qedhere
\end{align*}
\end{proof}
Harmonicity can also be characterized with the help of the mean curvature of geodesic spheres. Consider the open subset ${\breve T}M=\{\mathbf v\in \mathring{T}M \mid \theta(\mathbf v)\cdot\|\mathbf v\|\neq 0\}$ in $TM$ and its intersection $\breve T_pM=\breve TM\cap T_pM$ with $T_pM$. If $\mathbf v\in \breve T_pM$, then the exponential map $\exp_p\colon \mathring T_pM\to M$ maps an open neighborhood $U\subset \breve T_pM$ of $\mathbf v$ diffeomorphically onto its image. In particular, the image of $U\cap S_p(\|\mathbf v\|)$ under the exponential map is a smooth hypersurface of $M$. By the Gauss lemma, the tangent vector $\gamma'_{\mathbf v}(1)/\|\mathbf v\|$ is a unit normal of the hypersurface $\exp(U\cap S_p(\|\mathbf v\|))$. Let $L_{\mathbf v}$ denote the shape operator of the hypersurface $\exp(U\cap S_p(\|\mathbf v\|))$ with respect to the unit normal $\gamma'_{\mathbf v}(1)/\|\mathbf v\|$ and let $h(\mathbf v)=\tr(L_{\mathbf v})$ be the trace of $L_{\mathbf v}$.

\begin{theorem}[{\cite[Theorem 3.11]{Gray_Tubes}}]\label{thm:mean_curvature}
	The mean curvature function of geodesic spheres can be expressed with the help of the volume density function as follows
	\[
	h=-\partial_r(\ln(\bar r^{n-1}\theta))=-\frac{n-1}{\bar r}-\frac{\partial_r\theta}{\theta}.
	\]
\end{theorem}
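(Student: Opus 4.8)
The plan is to introduce geodesic polar coordinates around $p$ and to express both the volume density $\theta$ and the shape operator $L_{\mathbf v}$ in terms of one and the same family of Jacobi fields along the radial geodesic; the formula will then emerge because the mean curvature $h$ and the radial logarithmic derivative of $\bar r^{n-1}\theta$ are two readings of a single determinant. Write $\mathbf v=r\mathbf u$ with $\mathbf u\in S_p$ and $r=\|\mathbf v\|$, fix an orthonormal basis $\mathbf e_1,\dots,\mathbf e_{n-1}$ of $\mathbf u^\perp$ together with $\mathbf e_n=\mathbf u$, and for $1\le i\le n-1$ let $J_i$ be the Jacobi field along the unit-speed geodesic $\gamma_{\mathbf u}$ arising from the geodesic variation $s\mapsto\gamma_{\mathbf u+s\mathbf e_i}$, so that $J_i(0)=\mathbf 0$, $\nabla_t J_i(0)=\mathbf e_i$, and $J_i(t)=T_{t\mathbf u}\exp_p(t\mathbf e_i)$; each $J_i$ stays orthogonal to $\gamma_{\mathbf u}'$.

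For the density, I would invoke the Gauss lemma: $T_{\mathbf v}\exp_p$ carries $\mathbf e_n$ to the unit normal $\gamma_{\mathbf v}'(1)/\|\mathbf v\|=\gamma_{\mathbf u}'(r)$ and carries $\mathbf u^\perp$ into the orthogonal complement of this normal, so the Gram matrix $g$ defining $\theta$ is block diagonal with a single $1$ in the radial slot. Hence $\theta(\mathbf v)$ equals the square root of the $(n-1)\times(n-1)$ Gram determinant of the vectors $T_{\mathbf v}\exp_p\mathbf e_i=J_i(r)/r$. Writing the $J_i$ as the columns of a matrix $\mathbf J(t)$ with respect to a parallel orthonormal frame of $(\gamma_{\mathbf u}')^{\perp}$, the factors of $r$ cancel and one obtains $\bar r^{n-1}\theta(r\mathbf u)=\det\mathbf J(r)=:D(r)$, positive on $\breve T_pM$ precisely because there $\theta\neq 0$, i.e. $\mathbf J(r)$ is invertible.

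For the shape operator, the tangent space of the geodesic sphere at $\exp(\mathbf v)$ is spanned by the $J_i(r)$, and combining the Weingarten identity $L_{\mathbf v}X=-\nabla_X N$ for the radial unit normal field $N$ (which equals $\gamma_{\mathbf u}'$ along the geodesic) with the variational symmetry $\nabla_{J_i}N=\nabla_t J_i$ gives $L_{\mathbf v}J_i(r)=-\nabla_t J_i(r)$. In the parallel frame this reads $-\mathbf J'(r)=\mathbf J(r)\mathcal L$ for the matrix $\mathcal L$ of $L_{\mathbf v}$, whence $\mathcal L=-\mathbf J(r)^{-1}\mathbf J'(r)$, and Jacobi's formula for the derivative of a determinant yields $h(\mathbf v)=\tr L_{\mathbf v}=-\tr\!\big(\mathbf J^{-1}\mathbf J'\big)(r)=-(\ln\det\mathbf J)'(r)=-D'(r)/D(r)$. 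Since $\partial_r$ is simply differentiation in $r$ along the ray $t\mapsto t\mathbf u$, this is exactly $h=-\partial_r\ln(\bar r^{n-1}\theta)=-(n-1)/\bar r-\partial_r\theta/\theta$, as claimed.

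The step I expect to demand the most care is the passage from the pointwise identity $L_{\mathbf v}J_i(r)=-\nabla_t J_i(r)$ to the trace formula $-D'/D$: one must pin down the sign of $L_{\mathbf v}$ relative to the prescribed normal $\gamma_{\mathbf v}'(1)/\|\mathbf v\|$ (the Euclidean sphere, where $\theta\equiv 1$ and $h=-(n-1)/r$, is a useful sanity check), verify that the $J_i(r)$ genuinely span the sphere's tangent space and stay normal to $\gamma_{\mathbf u}'$, and apply Jacobi's determinant formula on the interval where $\mathbf J$ is invertible. All of this is legitimate exactly on $\breve T_pM$, where the nonvanishing of $\theta$ keeps us away from conjugate points.
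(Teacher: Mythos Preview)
The paper does not prove this theorem; it merely quotes it from Gray's \emph{Tubes}, so there is no in-paper argument to compare against. Your proof is correct and is in fact the standard derivation (essentially the one in the cited reference): write the transverse part of $T_{r\mathbf u}\exp_p$ via the Jacobi matrix $\mathbf J$, identify $r^{n-1}\theta(r\mathbf u)$ with $\det\mathbf J(r)$, express the shape operator as $-\mathbf J'\mathbf J^{-1}$ (equivalently $-\mathbf J^{-1}\mathbf J'$ on the other side; only the trace matters), and apply Jacobi's determinant formula. Your caution about the sign convention is well placed; the Euclidean check you mention indeed fixes it.
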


We shall use the big $O$ notation in the following sense. For two functions $F$ and $G$ defined on a neighborhood of the zero section of $TM$, we write $F(r\mathbf u)=G(r\mathbf u)+O(r^{m})$ if for any compact subset $K\subseteq M$, there exist positive numbers $C_K>0$ and $r_K>0$ such that $|F(r\mathbf u)-G(r\mathbf u)|\leq C_Kr^m$ for any $\mathbf u\in SM$ with base point in $K$, and any $r$ for which $|r|<r_K$. Typically we shall apply this notation in the case when $M$ is a real analytic manifold and $F-G$ is analytic in a neighborhood of the zero section of $TM$. In that case, we can choose a chart $\phi=(x^1,\dots,x^n)$ from the $\mathcal C^{\omega}$-atlas of $M$ around any point $p\in M$ mapping $p$ to $\phi(p)=\mathbf 0$. The chart $\phi$ induces a chart $(\hat x^1,\dots,\hat x^n,y^1,\dots,y^n)=(x^1\circ \pi,\dots, x^n\circ \pi,\ud x^1,\dots,\ud x^n)$ on the tangent bundle, where $\pi\colon TM\to M$ is the projection of the bundle. The difference $F-G$ can be written as the sum of a power series $F-G=\sum_{\alpha,\beta} a_{\alpha,\beta}\hat x^{\alpha}y^{\beta}$ around $\mathbf 0_p\in T_pM$, where the sum goes for all multiindices $\alpha$ and $\beta$. Then equation $F(r\mathbf u)=G(r\mathbf u)+O(r^{m})$ is equivalent to the condition that $a_{\alpha,\beta}=0$ for all $\alpha,\beta$ such that $|\beta|<m$, for any choice of $p$ and $\phi$.

\section{Proof of Theorem \ref{thm:main}\label{sec:3}}

%Let $\nabla$ be the Levi-Civita connection of 
%$M$. For a vector field $X$ along a curve $\gamma$, we shall use the notation 
%$X'$ for the covariant derivative $\nabla_{\gamma'}X$. 
In this section, we prove that if a Riemannian manifold $M$ satisfies equation \eqref{eq:tube_density} for any geodesic segment $\gamma$ and any sufficiently small radius $r$ with a given function $v$, then $M$ is harmonic.

It was proved in \cite{tube1} that the tube property implies that the manifold is Einstein. As the proof used the tube property only for geodesic curves, we obtain that $M$ must be an Einstein manifold, thus, by the Kazdan--DeTurck theorem \cite{Kazdan_DeTurck}, normal coordinate charts give a $\mathcal C^{\omega}$-atlas on $M$ with respect to which the metric tensor  is real analytic.

As $M$ is a real analytic Riemannian manifold, $SM$ is also a real analytic manifold, and there exist analytic functions $a_i\colon SM\to \mathbb R$ for $i=0,1,2,\dots$, so that 
\begin{equation}\label{eq:power_series}
\theta(r\mathbf u)=\sum_{i=0}^{\infty}a_i(\mathbf u)r^i
\end{equation}
for any $\mathbf u\in SM$ and any $r$ with sufficiently small absolute value. The identity $\theta(r\mathbf u)=\theta((-r)(-\mathbf u))$ gives at once that 
\begin{equation}\label{eq:symmetry_of_a_i}
a_i(-\mathbf u)=(-1)^ia_i(\mathbf u).
\end{equation}

Equation \eqref{eq:power_series} shows that $M$ is harmonic if and only if all the functions $a_i$ are constant. Observe that if $a_i$ is constant for an odd $i$, then $a_i\equiv 0$ by equation \eqref{eq:symmetry_of_a_i}.

\begin{definition} We shall say that a real analytic Riemannian manifold $M$ is \emph{harmonic up to order $k$} if the functions $a_i$ are constant for $0\leq i\leq k$.
\end{definition}
When $a_i$ is a constant function, we shall write $a_i$ instead of $a_i(\mathbf u)$, whatever the unit tangent vector $\mathbf u\in SM$ is, and think of $a_i$ as a real number.

We are going to prove that $M$ is harmonic up to order $2k$ by induction on $k$. It is well-known that $a_0\equiv 1$ (\cite[Corollary 9.9]{Gray_Tubes}) for any Riemannian manifold, so the base case $k=0$ is settled. Assume that $M$ is harmonic up to order $2k$.

\begin{lemma}\label{le:Vanhecke} If a real analytic manifold is harmonic up to order $2k$, then it is also harmonic up to order $2k+1$. 
\end{lemma}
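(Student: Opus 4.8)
The plan is to leverage the symmetry relation \eqref{eq:symmetry_of_a_i}, which already forces $a_i \equiv 0$ whenever $a_i$ is constant and $i$ is odd. Under the inductive hypothesis that $M$ is harmonic up to order $2k$, the functions $a_0,\dots,a_{2k}$ are constant, and I want to show $a_{2k+1}$ is also constant (hence, being of odd index, identically zero). My first step would be to understand what information the odd-index coefficient carries. Since $a_{2k+1}(-\mathbf u) = -a_{2k+1}(\mathbf u)$, this coefficient is an \emph{odd} function on each fiber $S_p$. The natural strategy is to show that this odd part is in fact constrained to vanish by combining the volume-symmetry of Theorem \ref{thm:volume_symmetry} (namely $\theta\circ\iota = \theta$) with the inductive knowledge that the lower-order terms are already radial.

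The key technical step would be to expand the symmetry $\theta\circ\iota=\theta$ in powers of $r$ and read off what it says at order $r^{2k+1}$. Concretely, for a unit vector $\mathbf u$, one has $\iota(r\mathbf u) = -r\gamma_{\mathbf u}'(r)$, so $\theta(\iota(r\mathbf u)) = \theta(-r\,\gamma_{\mathbf u}'(r))$; expanding $\gamma_{\mathbf u}'(r)$ as a power series in $r$ along the geodesic and substituting into \eqref{eq:power_series}, the identity $\theta(r\mathbf u) = \theta(-r\gamma_{\mathbf u}'(r))$ becomes a sequence of pointwise equations relating the $a_i$. Because $a_0,\dots,a_{2k}$ are already constants, all the lower-order contributions either cancel or reduce to constants, and the order-$r^{2k+1}$ coefficient of the symmetry equation should express $a_{2k+1}(\mathbf u)$ plus a correction built from the (constant) lower coefficients and the geodesic-flow derivatives encoded in $\mathbf G$. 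Lemma \ref{le:iota_inv} is clearly meant to be applied here: it quantifies exactly how radial derivatives fail to be $\iota$-invariant, with the defect governed by $\mathbf G f/\bar r$, and the induction hypothesis makes the relevant lower-order $f$ a first integral up to the orders in play.

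The cleanest route is probably to work with the logarithmic/mean-curvature formulation rather than $\theta$ directly. By Theorem \ref{thm:mean_curvature}, $h = -(n-1)/\bar r - (\partial_r\theta)/\theta$, and harmonicity up to a given order is equivalent to $h$ being radial up to the corresponding order. I would write $h(r\mathbf u) = \sum_j b_j(\mathbf u) r^{j}$ (starting at $r^{-1}$) and translate ``harmonic up to order $2k$'' into ``$b_j$ constant for $j \le 2k-1$''. Then applying Lemma \ref{le:iota_inv} to $f = \ln(\bar r^{n-1}\theta)$ — whose $\iota$-invariance is exactly the content of Theorem \ref{thm:volume_symmetry} — gives $(\partial_r f)\circ\iota = \partial_r f - \mathbf G f/\bar r$, i.e. $h\circ\iota = -h - \mathbf G f/\bar r$ after accounting for the normal reorientation. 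Matching the $r^{2k}$-coefficients of the two sides, and using that the terms $\mathbf G f$ and $h\circ\iota$ contribute only through already-constant lower coefficients, should pin down the top coefficient $a_{2k+1}$ as a fiberwise divergence or geodesic-flow derivative of constants, forcing it to be constant.

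I expect the main obstacle to be bookkeeping the composition with $\iota$ and the geodesic flow at exactly one order higher than the inductive hypothesis: one must verify that every contribution to the order-$r^{2k+1}$ term of $\theta\circ\iota$ that involves the \emph{non-constant} part of a coefficient $a_i$ with $i<2k+1$ actually vanishes, which is where ``harmonic up to order $2k$'' is indispensable, and that the sole surviving non-radial piece is a total $\mathbf G$-derivative whose fiber integral or pointwise structure forces $a_{2k+1}$ to be constant. Getting the combinatorial coefficients right in the Taylor expansion of $\gamma_{\mathbf u}'(r)$ (equivalently, controlling the interplay of $\mathbf X$ and $\mathbf G$ on $TM$) is the delicate part; the conceptual input, however, is simply that the antisymmetry \eqref{eq:symmetry_of_a_i} plus the involution-symmetry of Theorem \ref{thm:volume_symmetry} leave no room for a nonconstant odd coefficient once the even orders below are already radial.
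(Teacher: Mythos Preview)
Your second paragraph already \emph{is} the paper's proof, and it is far simpler than you fear. One writes
\[
\sum_{i}a_i(\mathbf u)r^i=\theta(r\mathbf u)=\theta(-r\gamma'_{\mathbf u}(r))=\sum_i(-r)^i\,a_i(\gamma'_{\mathbf u}(r)),
\]
Taylor-expands each scalar function $\alpha_i(r)=a_i(\gamma'_{\mathbf u}(r))$ in $r$, and reads off the coefficient of $r^{2k+1}$:
\[
2a_{2k+1}(\mathbf u)=\sum_{j=1}^{2k+1}(-1)^{2k+1-j}\frac{\alpha_{2k+1-j}^{(j)}(0)}{j!}.
\]
Every term on the right involves a derivative of some $\alpha_i$ with $i\le 2k$, and these are constant by the inductive hypothesis, so the right-hand side vanishes. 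There is no need to Taylor-expand the curve $\gamma'_{\mathbf u}(r)$ in $SM$, no ``combinatorial coefficients'', and no interplay of $\mathbf X$ and $\mathbf G$; the bookkeeping you worry about in your last paragraph simply does not arise once you expand the scalar compositions $\alpha_i$ directly.

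The detour through the mean-curvature function in your third paragraph is unnecessary here and is in fact how the paper proves the \emph{next} lemma (Lemma~\ref{le:stability_D'Atri}), not this one. Two remarks on it: first, your identity ``$h\circ\iota=-h-\mathbf Gf/\bar r$ after accounting for the normal reorientation'' is wrong --- there is no normal reorientation, and applying Lemma~\ref{le:iota_inv} to the $\iota$-invariant function $f=\ln\theta$ gives $h\circ\iota=h+\mathbf Gf/\bar r$; second, pursued correctly that route produces information at order $r^{2k+2}$ about $\mathbf G_{\mathbf u}a_{2k+2}$, which is exactly what Lemma~\ref{le:stability_D'Atri} needs, but it is not the efficient tool for isolating $a_{2k+1}$.
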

\begin{proof} L.~Vanhecke formulated and proved a slight modification of this statement in \cite{Vanhecke}, but in fact his proof implies our lemma as well. We recall Vanhecke's proof for the reader's convenience.
Choose an arbitrary unit tangent vector $\mathbf u\in SM$.  Then Theorem \ref{thm:volume_symmetry} implies
\begin{equation*}
\sum_{i=0}^{\infty}a_i(\mathbf u)r^i=\theta(r\mathbf u)=\theta(-r\gamma'_{\mathbf u}(r))=\sum_{i=0}^{\infty}a_i(\gamma'_{\mathbf u}(r))(-r)^i.
\end{equation*}
Writing the analytical functions $\alpha_i(r)=a_i(\gamma'_{\mathbf u}(r))$ as the sum of their Taylor series, we obtain
\begin{equation*}
\sum_{i=0}^{\infty}a_i(\mathbf u)r^i=\sum_{i=0}^{\infty}\sum_{j=0}^{\infty}(-1)^i\frac{\alpha_i^{(j)}(0)}{j!}r^{i+j}.
\end{equation*}
Since $\alpha_i(0)=a_i(\mathbf u)$, equating the coefficients of $r^{2k+1}$ on the two sides gives
\begin{equation}\label{eq:Vanhecke}
2a_{2k+1}(\mathbf u)=\sum_{j=1}^{2k+1}(-1)^{2k+1-j}\frac{\alpha_{2k+1-j}^{(j)}(0)}{j!}.
\end{equation}
As the functions $\alpha_{2k+1-j}$ are constant for $1\leq j\leq 2k+1$ by our assumption, their derivatives vanish, therefore $a_{2k+1}\equiv 0$.
%
%
%[Erre lehet, hogy nem lesz sz\"uks\'eg, de \'erdekes: 
%Comparing the coefficients of $r^{2k+3}$, a similar argument gives
%\begin{equation*}
%2a_{2k+3}(\mathbf u)=\alpha'_{2k+2}(0).
%\end{equation*}
%Form this, by induction, $M$ is D'Atri up to order $2k+1$ if and only if $a_{2l}(\gamma'(t))$ is constant for $l\leq k$, for any unit speed geodesic $\gamma$.]
\end{proof}

A Riemannian manifold is a D'Atri space if and only if the mean curvature function of geodesic spheres satisfies the symmetry relation $h=h\circ\iota$ (see \cite[Section 2.7]{DamekRicci}). The following lemma is a ``stability version'' of the fact that harmonic manifolds are D'Atri spaces.

\begin{lemma}\label{le:stability_D'Atri} If $M$ is harmonic up to order $2k$, then 
\[h(r\mathbf u)-h(\iota(r\mathbf u))=O(r^{2k+2}).\]
\end{lemma}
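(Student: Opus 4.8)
The plan is to express both $h$ and $h\circ\iota$ through the volume density function via Theorem \ref{thm:mean_curvature}, reduce their difference to a single term involving the geodesic-spray derivative $\mathbf G\theta$, and then control the order of vanishing of that term using the power series \eqref{eq:power_series}.

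First I would record two elementary facts about the involution evaluated along $r\mathbf u$ with $\mathbf u\in SM$: since $\iota(r\mathbf u)=-r\gamma'_{\mathbf u}(r)$ and geodesics issuing from unit vectors have unit speed, one has $\bar r(\iota(r\mathbf u))=r$; and by Theorem \ref{thm:volume_symmetry}, $\theta(\iota(r\mathbf u))=\theta(r\mathbf u)$. Writing $h=-\frac{n-1}{\bar r}-\frac{\partial_r\theta}{\theta}$ and evaluating $h\circ\iota$ at $r\mathbf u$, the two $\frac{n-1}{\bar r}$ terms cancel because $\bar r\circ\iota$ equals $r$ there, and the denominators agree since $\theta\circ\iota=\theta$. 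Applying Lemma \ref{le:iota_inv} with $f=\theta$ to rewrite $(\partial_r\theta)\circ\iota=\partial_r\theta-\mathbf G\theta/\bar r$, the two $\partial_r\theta$ contributions cancel as well, leaving the clean identity
\[
h(r\mathbf u)-h(\iota(r\mathbf u))=-\frac{(\mathbf G\theta)(r\mathbf u)}{r\,\theta(r\mathbf u)}.
\]

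The remaining task is to show $(\mathbf G\theta)(r\mathbf u)=O(r^{2k+3})$, since $\theta(r\mathbf u)=1+O(r)$ is bounded away from zero on compact base sets for small $r$. Here I would use $\gamma'_{r\mathbf u}(t)=r\gamma'_{\mathbf u}(rt)$ together with the definition $\mathbf G_{\mathbf v}f=\frac{\ud}{\ud t}f(\gamma'_{\mathbf v}(t))|_{t=0}$ and the expansion \eqref{eq:power_series}. Setting $\alpha_i(s)=a_i(\gamma'_{\mathbf u}(s))$ as in the proof of Lemma \ref{le:Vanhecke}, substituting the series and differentiating once in $t$ at $t=0$ yields
\[
(\mathbf G\theta)(r\mathbf u)=\sum_{i=0}^{\infty}\alpha_i'(0)\,r^{i+1}.
\]
By hypothesis $M$ is harmonic up to order $2k$, and by Lemma \ref{le:Vanhecke} it is in fact harmonic up to order $2k+1$; hence $a_i$ is constant and $\alpha_i'\equiv 0$ for every $i\leq 2k+1$. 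Thus the series starts at $i=2k+2$, giving $(\mathbf G\theta)(r\mathbf u)=O(r^{2k+3})$ and therefore $h(r\mathbf u)-h(\iota(r\mathbf u))=O(r^{2k+2})$, as required.

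The computation itself is mostly bookkeeping, so I do not expect a genuine obstacle; the one point that must not be overlooked is that the sharper exponent $2k+2$ — rather than $2k+1$ — genuinely relies on the extra vanishing $\alpha_{2k+1}'(0)=0$ furnished by Lemma \ref{le:Vanhecke}, which is why that lemma is invoked here. The only analytic care needed is to verify the $O$-estimates are uniform over compact subsets of $M$, which follows because the coefficients $a_i$ and their first derivatives along geodesics are bounded on compacta and $\theta$ is positive near the zero section.
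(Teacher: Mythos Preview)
Your proof is correct and follows essentially the same route as the paper: both reduce $h-h\circ\iota$ via Lemma~\ref{le:iota_inv} and Theorem~\ref{thm:mean_curvature} to $-\mathbf G\theta/(r\theta)$ and then kill the low-order terms of $\mathbf G\theta$ using that $a_i$ is constant for $i\le 2k+1$. The only cosmetic difference is that the paper applies Lemma~\ref{le:iota_inv} to $f=\ln\theta$ (after expanding $\ln\theta(r\mathbf u)=\ln p(r)+a_{2k+2}(\mathbf u)r^{2k+2}+O(r^{2k+3})$) whereas you apply it to $f=\theta$ and divide by $\theta$ afterward; the resulting identity and estimate are the same.
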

\begin{proof}
Using Lemma \ref{le:Vanhecke}, we can write $\theta$ as
\[\theta(r\mathbf u)=p(r)+a_{2k+2}(\mathbf u)r^{2k+2}+O(r^{2k+3}),\]
where $p$  is a polynomial of degree $2k$ with $p(r)=\sum_{i=0}^{2k}a_ir^i=1+O(r^2)$. 
Thus, $p^{-1}(r)=1+O(r^2)$, and 
\begin{equation}\label{eq:ln_theta}
\ln(\theta(r\mathbf u))=\ln(p(r))+\ln\big(1+p^{-1}(r)(a_{2k+2}(\mathbf u)r^{2k+2}+O(r^{2k+3}))\big)=\ln(p(r))+a_{2k+2}(\mathbf u)r^{2k+2}+O(r^{2k+3}).
\end{equation}
If an analytic function $F$ defined on an open neighborhood of the zero section of $TM$ can be written as 
\[
F(r\mathbf u)=\sum_{i=1}^{\infty}b_i(\mathbf u)r^i
\]
for $\mathbf u\in SM$ and $r>0$, then 
\[
\frac{\mathbf G_{r\mathbf u}F}{r}=\sum_{i=1}^{\infty}(\mathbf G_{\mathbf u}b_i)r^i.
\]
Applying this statement to the series \eqref{eq:ln_theta} of the $\iota$-invariant function $\ln\circ \theta$, and using Theorem \ref{thm:mean_curvature} and Lemma \ref{le:iota_inv}, we obtain
\[
h(r\mathbf u)-h(\iota(r\mathbf u))=\partial_r(\ln\circ\theta)(\iota(r\mathbf u))-\partial_r(\ln\circ\theta)(r\mathbf u)=-(\mathbf G_{\mathbf u}a_{2k+2})r^{2k+2}+O(r^{2k+3})=O(r^{2k+2}).\qedhere
\]  
\end{proof}

We shall need the following Fubini-type formula.
\begin{lemma}\label{le:Fubini} For any continuous function $f\colon V_2(\mathbb R^n)\to\mathbb R$ defined on the Stiefel manifold
\[V_2(\mathbb R^n)=\{(\mathbf u,\mathbf v)\in \mathbb S^{n-1}\times \mathbb S^{n-1}\mid \mathbf u\perp\mathbf v\}\cong \mathrm{O}(n)/\mathrm{O}(n-2),
\] 
we have
\begin{equation}\label{eq:Stiefel-Fubini}
\int_{\mathbb S^{n-1}}\left(\int_{\mathbb S^{n-2}_{\mathbf u}}f(\mathbf u,\mathbf v)\id \mathbf v\right)\id \mathbf u=\int_{\mathbb S^{n-1}}\left(\int_{\mathbb S^{n-2}_{\mathbf v}}f(\mathbf u,\mathbf v)\id \mathbf u\right)\id \mathbf v,
\end{equation}
where $\mathbb S^{n-1}$ is the sphere of unit vectors in $\mathbb R^n$, $\mathbb S^{n-2}_{\mathbf u}$ denotes the sphere of unit vectors  orthogonal to $\mathbf u$, integrations over the unit spheres are always taken with respect to the volume measures induced by the standard Riemannian metric on them.
\end{lemma}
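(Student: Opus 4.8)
The plan is to recognize both sides of \eqref{eq:Stiefel-Fubini} as integrals against $\mathrm{O}(n)$-invariant Radon measures on the Stiefel manifold $V_2(\mathbb{R}^n)$ and to invoke the uniqueness of such measures. Write $\mu_L(f)$ and $\mu_R(f)$ for the left- and right-hand iterated integrals, regarded as positive linear functionals on the space of continuous functions on $V_2(\mathbb{R}^n)$; by the Riesz representation theorem each is a Radon measure. The group $\mathrm{O}(n)$ acts on $V_2(\mathbb{R}^n)$ diagonally by $g\cdot(\mathbf u,\mathbf v)=(g\mathbf u,g\mathbf v)$, and this action is transitive with isotropy $\mathrm{O}(n-2)$, which is exactly the identification $V_2(\mathbb{R}^n)\cong \mathrm{O}(n)/\mathrm{O}(n-2)$ recorded in the statement.

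First I would check that $\mu_L$ is $\mathrm{O}(n)$-invariant. For a fixed $g\in\mathrm{O}(n)$ and a fixed $\mathbf u$, the restriction of $g$ to the unit sphere $\mathbb{S}^{n-2}_{\mathbf u}$ of the hyperplane $\mathbf u^\perp$ is an isometry onto $\mathbb{S}^{n-2}_{g\mathbf u}$, so the substitution $\mathbf w=g\mathbf v$ turns the inner integral $\int_{\mathbb{S}^{n-2}_{\mathbf u}}f(g\mathbf u,g\mathbf v)\id\mathbf v$ into $\int_{\mathbb{S}^{n-2}_{g\mathbf u}}f(g\mathbf u,\mathbf w)\id\mathbf w$; the outer substitution $\mathbf u'=g\mathbf u$, again an isometry of $\mathbb{S}^{n-1}$, then yields $\mu_L(f\circ g)=\mu_L(f)$, where $(f\circ g)(\mathbf u,\mathbf v)=f(g\mathbf u,g\mathbf v)$. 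The identical computation shows that $\mu_R$ is $\mathrm{O}(n)$-invariant.

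Since $\mathrm{O}(n)$ is compact and acts transitively, the invariant Radon measure on the homogeneous space $V_2(\mathbb{R}^n)$ is unique up to a positive scalar, being the pushforward of Haar measure. Hence $\mu_R=c\,\mu_L$ for some $c>0$. To pin down $c$ I would test both functionals on the constant function $f\equiv 1$: each fiber $\mathbb{S}^{n-2}_{\mathbf u}$ (respectively $\mathbb{S}^{n-2}_{\mathbf v}$) is a unit $(n-2)$-sphere of volume $\vol(\mathbb{S}^{n-2})$ independent of the base point, so $\mu_L(1)=\mu_R(1)=\vol(\mathbb{S}^{n-1})\,\vol(\mathbb{S}^{n-2})$, which forces $c=1$ and proves \eqref{eq:Stiefel-Fubini}.

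The only delicate points are the rigorous justification of the fibrewise change of variables, which rests on $g$ acting isometrically on each $\mathbb{S}^{n-2}_{\mathbf u}$, and the correct invocation of uniqueness of the invariant measure; the latter holds for any transitive action of a compact group and therefore does not require $V_2(\mathbb{R}^n)$ to be connected, a subtlety relevant only in the boundary case $n=2$, where the fibers degenerate to pairs of antipodal points. As an alternative to testing on $f\equiv1$, one may observe that the coordinate-swap involution $\sigma(\mathbf u,\mathbf v)=(\mathbf v,\mathbf u)$ satisfies $\mu_R=\sigma_*\mu_L$ and commutes with the diagonal $\mathrm{O}(n)$-action, so $\sigma_*\mu_L$ is again invariant; applying $\sigma$ twice then shows that the proportionality constant squares to $1$ and hence equals $1$.
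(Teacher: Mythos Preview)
Your argument is correct and follows essentially the same route as the paper: both sides are recognized, via the Riesz representation theorem, as $\mathrm{O}(n)$-invariant Radon measures on $V_2(\mathbb R^n)$, and uniqueness of the invariant measure together with the equality of total masses (you test on $f\equiv 1$, the paper computes $\mu(V_2)=\nu(V_2)=n(n-1)\omega_n\omega_{n-1}$) gives the conclusion. Your write-up is just more explicit about verifying the $\mathrm{O}(n)$-invariance and adds the pleasant alternative via the swap involution $\sigma$.
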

\begin{proof} Both sides of \eqref{eq:Stiefel-Fubini} define an $\mathrm{O}(n)$-invariant positive linear functional on the space of continuous functions on $V_2(\mathbb R^n)$, thus, by the Riesz representation theorem, there exist unique regular Borel measures $\mu$ and $\nu$ on $V_2(\mathbb R^n)$ such that for any continuous function $f\colon V_2(\mathbb R^n)\to \mathbb R$, we have
\begin{equation*}%\label{eq:Stiefel-Fubini0}
\int_{\mathbb S^{n-1}}\left(\int_{\mathbb S^{n-2}_{\mathbf u}}f(\mathbf u,\mathbf v)\id \mathbf v\right)\id \mathbf u=\int_{V_2(\mathbb R^n)}f\id\mu\quad\text{ and }\quad\int_{\mathbb S^{n-1}}\left(\int_{\mathbb S^{n-2}_{\mathbf v}}f(\mathbf u,\mathbf v)\id \mathbf u\right)\id \mathbf v=\int_{V_2(\mathbb R^n)}f\id\nu.
\end{equation*}
Both $\mu$ and $\nu$ are $\mathrm{O}(n)$-invariant, and normalized so that $\mu(V_2(\mathbb R^n))=\nu(V_2(\mathbb R^n))=n(n-1)\omega_n\omega_{n-1}$, hence $\mu=\nu$. The equality of the measures implies the lemma.
\end{proof}
Our aim is to show that the function $a_{2k+2}$ is constant. As an intermediate step, we prove a weaker statement.

\begin{lemma}\label{ball_homogeneity}
	Assume that a Riemannian manifold $M$ having the tube property for geodesic curves is harmonic up to order $2k$. Then the integral $\int_{S_p}a_{2k+2}(\mathbf u)\id \mathbf u$ does not depend on $p\in M$.
\end{lemma}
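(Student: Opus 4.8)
The plan is to convert the hypothesis on tube volumes into a statement about geodesic spheres by means of the Fubini identity of Lemma \ref{le:Fubini}, and then to read off the coefficient of $r^{2k+2}$ with the stability estimate of Lemma \ref{le:stability_D'Atri}. First I would parametrize the solid tube about a unit-speed geodesic $\gamma$ by $(t,\mathbf v)\mapsto\exp_{\gamma(t)}(\mathbf v)$, with $\mathbf v$ ranging over the $r$-disk in $\gamma'(t)^{\perp}$ carried along by parallel transport. A direct Jacobi-field computation identifies the Jacobian of this map at $t=0$, $\mathbf v=\rho\mathbf w$ (where $\mathbf u:=\gamma'(0)$, $\mathbf w\perp\mathbf u$, $\|\mathbf w\|=1$), as the determinant of an $n\times n$ matrix of Jacobi fields along $s\mapsto\exp_p(s\rho\mathbf w)$ whose columns agree with those computing $\theta(\rho\mathbf w)$ except that the column in the $\mathbf u$-direction is replaced by the velocity-type Jacobi field $Y$ with $Y(0)=\mathbf u$, $Y'(0)=\mathbf 0$. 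Writing $\mathbf A,\mathbf B$ for the matrix solutions of the Jacobi equation with $(\mathbf A(0),\mathbf A'(0))=(I,0)$ and $(\mathbf B(0),\mathbf B'(0))=(0,I)$, and choosing the first basis vector to be $\mathbf u$, factoring out $\det\mathbf B(1)=\theta(\rho\mathbf w)$ and expanding the remaining determinant gives the clean formula
\[
\theta^{\mathrm{tube}}_{p,\mathbf u}(\rho\mathbf w)=\theta(\rho\mathbf w)\,\langle\mathbf u,\mathbf B(1)^{-1}\mathbf A(1)\,\mathbf u\rangle .
\]
Since the tube property forces the orthogonal disk integral to be constant along every geodesic, one has $v(r)=\int_{B^0(r\mathbf u)}\theta^{\mathrm{tube}}_{p,\mathbf u}$ for all $(p,\mathbf u)\in SM$; integrating over $\mathbf u\in S_p$, passing to polar coordinates in the disk, and differentiating in $r$ then shows that
\[
\Psi_p(\rho):=\int_{S_p}\int_{\mathbb S^{n-2}_{\mathbf u}}\theta^{\mathrm{tube}}_{p,\mathbf u}(\rho\mathbf w)\id\mathbf w\id\mathbf u
\]
is independent of $p$.

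Next I would apply Lemma \ref{le:Fubini} to interchange $\mathbf u$ and $\mathbf w$. For a fixed geodesic direction $\mathbf w$ the factor $\langle\mathbf u,\mathbf B(1)^{-1}\mathbf A(1)\mathbf u\rangle$ integrates over $\mathbf u\in\mathbb S^{n-2}_{\mathbf w}$ to $\omega_{n-1}$ times the trace of $\mathbf B(1)^{-1}\mathbf A(1)$ restricted to $\mathbf w^{\perp}$, while $\theta(\rho\mathbf w)$ pulls out of the $\mathbf u$-integral. The crucial geometric point is that this restricted trace equals $-\rho\,h(\iota(\rho\mathbf w))$: reversing the geodesic shows that $\mathbf B(1)^{-1}\mathbf A(1)$ is (up to the speed factor $\rho$) the shape operator, at $p$, of the geodesic sphere centered at the far endpoint $\exp_p(\rho\mathbf w)$, whose trace is the mean curvature at $\iota(\rho\mathbf w)$. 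Hence
\[
\Psi_p(\rho)=\omega_{n-1}\int_{S_p}\theta(\rho\mathbf w)\,\big(-\rho\,h(\iota(\rho\mathbf w))\big)\id\mathbf w .
\]

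Finally I would use the stability estimate $h-h\circ\iota=O(\rho^{2k+2})$ of Lemma \ref{le:stability_D'Atri} to replace $h\circ\iota$ by $h$ at the cost of an $O(\rho^{2k+3})$ error, and then Theorem \ref{thm:mean_curvature} in the form $-\rho\,\theta\,h=\rho\,\partial_r\theta+(n-1)\theta$. Setting $I_p(\rho)=\int_{S_p}\theta(\rho\mathbf w)\id\mathbf w=\sum_i\big(\int_{S_p}a_i\big)\rho^i$, this yields $\Psi_p(\rho)=\omega_{n-1}\big(\rho I_p'(\rho)+(n-1)I_p(\rho)\big)+O(\rho^{2k+3})$. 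Comparing the coefficient of $\rho^{2k+2}$, the $O$-term contributes nothing and the bracket contributes $(n+2k+1)\int_{S_p}a_{2k+2}$; as $\Psi_p$ is independent of $p$, so is $\int_{S_p}a_{2k+2}$, which is the assertion.

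I expect the main obstacle to be the geometric identification of the restricted trace of $\mathbf B(1)^{-1}\mathbf A(1)$ with $-\rho\,h\circ\iota$, that is, relating the velocity-type Jacobi column arising from the axial direction of the tube to the shape operator of the geodesic sphere viewed from the opposite endpoint. This is precisely where the involution $\iota$ enters and where Lemma \ref{le:stability_D'Atri} becomes applicable; by comparison, the determinant factorization and the bookkeeping of normalizations in the Fubini step are routine.
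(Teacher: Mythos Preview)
Your proposal is correct and follows essentially the same route as the paper: express the tube-volume density along a geodesic as $\theta$ times a quadratic form in the shape operator $L_{\iota(\rho\mathbf w)}$, integrate over $\mathbf u\in S_p$, swap the order via Lemma~\ref{le:Fubini} to produce the trace $h\circ\iota$, replace $h\circ\iota$ by $h$ using Lemma~\ref{le:stability_D'Atri}, rewrite via Theorem~\ref{thm:mean_curvature} as a radial derivative of $r^{n-1}\theta$, and read off the coefficient of order $2k+2$. The only substantive difference is packaging: the paper imports the identity $\theta^{\mathrm{tube}}_{p,\mathbf u}(\rho\mathbf w)=-\langle L_{\iota(\rho\mathbf w)}\mathbf u,\mathbf u\rangle\,\theta(\rho\mathbf w)$ directly from Vanhecke--Willmore \cite[(3.10),(3.37)]{Vanhecke-Willmore}, whereas you rebuild it from the Jacobi matrices $\mathbf A,\mathbf B$ and then identify the restricted trace of $\mathbf B(1)^{-1}\mathbf A(1)$ with $-\rho\,h(\iota(\rho\mathbf w))$; that identification, which you flag as the main obstacle, is exactly what the cited formula encodes.
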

\begin{proof} Fix a point $p\in M$ and choose an arbitrary unit tangent vector $\mathbf u\in S_p$. Consider the tube $\mathcal T(\gamma,r)$ about the geodesic segment  $\gamma=\gamma_{\mathbf u}|_{[0,l]}$. Using equation \eqref{eq:tube_density} and the results of L.~Vanhecke and T.~J.~Willmore \cite[equations (3.10) and (3.37)]{Vanhecke-Willmore}, the volume of $\mathcal T(\gamma,r)$ can be written as 
\[
V_{\gamma}(r)=v(r)l=-\int_0^{l}\int_0^r \rho^{n-1}\int_{S^0(\gamma'(t))}\langle L_{\iota(\rho\mathbf v)}(\gamma'(t)),\gamma'(t)\rangle \theta(\rho\mathbf v)\id \mathbf v\id \rho\id t.
\]
Differentiating this equation with respect to $l$ at $l=0$ and with respect to $r$, we obtain
\begin{equation}\label{eq:hbh1}
v'(r)=-\int_{S^0(\mathbf u)}\langle L_{\iota(r \mathbf v)}(\mathbf u),\mathbf u\rangle r^{n-1}\theta(r\mathbf v)\id \mathbf v.
\end{equation}
Integrating equation \eqref{eq:hbh1} as $\mathbf u$ is running over the unit sphere $S_p$, and applying first Lemma \ref{le:Fubini}  then Lemma \ref{le:stability_D'Atri} and Theorem \ref{thm:mean_curvature},  we obtain
\begin{align*}
	{n}\omega_{n}v'(r)&= -\int_{S_p}\int_{S^{0}(\mathbf u)}\langle L_{\iota(r \mathbf v)}(\mathbf u),\mathbf u\rangle r^{n-1} \theta(r\mathbf v)\id \mathbf v\id \mathbf u=-\int_{S_p}\int_{S^{0}(\mathbf v)}\langle L_{\iota(r \mathbf v)}(\mathbf u),\mathbf u\rangle r^{n-1}\theta(r\mathbf v)\id \mathbf u\id \mathbf v
\\&=-\omega_{n-1}\int_{S_p}h(\iota(r \mathbf v)) r^{n-1}\theta(r\mathbf v)\id \mathbf v=-\omega_{n-1}\int_{S_p}h(r \mathbf v) r^{n-1}\theta(r\mathbf v)\id \mathbf v+O(r^{2k+n+1})
\\&= \omega_{n-1}\int_{S_p}\partial_r(r^{n-1}\theta(r\mathbf v))\id \mathbf v+O(r^{2k+n+1}).
	\end{align*}
Integration with respect to $r$ yields
\begin{equation}\label{eq:surf_geod_sphere}
\frac{{n}\omega_{n}}{\omega_{n-1}}v(r)=
 \int_{S_p}r^{n-1}\theta(r\mathbf v)\id \mathbf v+O(r^{2k+n+2})=\sum_{i=0}^{2k+2}\left(\int_{S_p}a_i(\mathbf v)\id \mathbf v\right)r^{n+i-1}+O(r^{2k+n+2}).
\end{equation}
From this, we get 
\[
\int_{S_p}a_{2k+2}(\mathbf v)\id \mathbf v=\frac{{n}\omega_{n}}{\omega_{n-1}}\cdot \frac{v^{(2k+n+1)}(0)}{(2k+n+1)!},
\]
where the right hand side does not depend on $p$.
\end{proof}

\begin{lemma}\label{half-ball}
If $M$ is harmonic up to order $2k$ and satisfies the tube property for geodesic curves with function $v$, then for $\mathbf u\in S_p$ and $r\in(0,\inj(p))$, the volume of the geodesic half-ball $\mathcal B^+(r\mathbf u)$ can be expressed as 
\begin{equation*}\label{eq:half_ball_volume}
\vol_{n}(\mathcal B^+(r\mathbf u))=\frac{{n}\omega_{n}}{2\omega_{n-1}}\int_0^r v(\rho)\id \rho +O(r^{2k+n+3}).
\end{equation*}
\end{lemma}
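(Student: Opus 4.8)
The plan is to compute the half-ball volume directly by integrating the volume density function in geodesic polar coordinates, and then compare the result with half of the full geodesic ball volume, which equation \eqref{eq:surf_geod_sphere} already ties to $\int_0^r v$. Writing the integral over the half-ball $B^+(r\mathbf u)$ radially gives
\[
\vol_n(\mathcal B^+(r\mathbf u))=\int_0^r\rho^{n-1}\int_{S^+(\mathbf u)}\theta(\rho\mathbf v)\id\mathbf v\id\rho,
\]
where $S^+(\mathbf u)$ is the closed hemisphere of unit vectors making a non-negative angle with $\mathbf u$. Substituting the power series \eqref{eq:power_series} and integrating term by term yields
\[
\vol_n(\mathcal B^+(r\mathbf u))=\sum_{i=0}^{\infty}\left(\int_{S^+(\mathbf u)}a_i(\mathbf v)\id\mathbf v\right)\frac{r^{n+i}}{n+i}.
\]

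Next I would reduce the claim to a statement about these coefficient integrals. The same computation over the whole sphere gives $\tfrac12\vol_n(\mathcal B_p(r))=\sum_i\tfrac12\big(\int_{S_p}a_i\big)\tfrac{r^{n+i}}{n+i}$, while integrating \eqref{eq:surf_geod_sphere} in $r$ shows $\tfrac{n\omega_n}{2\omega_{n-1}}\int_0^r v(\rho)\id\rho=\tfrac12\vol_n(\mathcal B_p(r))+O(r^{2k+n+3})$. Hence it suffices to prove that the half-ball volume agrees with half of the full ball modulo $O(r^{2k+n+3})$, i.e.\ that
\[
\int_{S^+(\mathbf u)}a_i(\mathbf v)\id\mathbf v=\frac12\int_{S_p}a_i(\mathbf v)\id\mathbf v\qquad\text{for all }i\le 2k+2.
\]
For $i\le 2k$ the function $a_i$ is constant by hypothesis, and $a_{2k+1}\equiv 0$ by Lemma \ref{le:Vanhecke}; these cases follow at once from the observation that the antipodal map is an isometry carrying $S^+(\mathbf u)$ onto the complementary hemisphere $S^+(-\mathbf u)$, so that $\vol(S^+(\mathbf u))=\tfrac12\vol(S_p)$.

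The crux, and the one place where $a_{2k+2}$ is not yet known to be constant, is the case $i=2k+2$. Here I would invoke the parity relation \eqref{eq:symmetry_of_a_i}: since $2k+2$ is even, $a_{2k+2}(-\mathbf v)=a_{2k+2}(\mathbf v)$, so $a_{2k+2}$ is invariant under the antipodal map. Pushing the integral over $S^+(\mathbf u)$ forward by $\mathbf v\mapsto-\mathbf v$ onto $S^+(-\mathbf u)$ and using this invariance gives $\int_{S^+(\mathbf u)}a_{2k+2}=\int_{S^+(-\mathbf u)}a_{2k+2}$; adding the two equal halves recovers $\int_{S_p}a_{2k+2}$, the shared equator $S^0(\mathbf u)$ being a null set. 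This is precisely the required identity for $i=2k+2$, and it is exactly here that evenness, rather than constancy, of $a_{2k+2}$ is what the argument needs.

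Combining the three displays, the order-$(2k+2)$ discrepancy cancels and every lower coefficient already matches, so $\vol_n(\mathcal B^+(r\mathbf u))=\tfrac12\vol_n(\mathcal B_p(r))+O(r^{2k+n+3})=\tfrac{n\omega_n}{2\omega_{n-1}}\int_0^r v(\rho)\id\rho+O(r^{2k+n+3})$, as claimed. The only routine point needing care is the uniformity of the error: since the $a_i$ are analytic on the compact fibres of $SM$ and all integrations run over fixed compact domains, the tail of the series is bounded uniformly for base points in any compact set, in accordance with the convention for the $O(\cdot)$ notation fixed in Section \ref{sec:2}.
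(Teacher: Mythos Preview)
Your proof is correct and follows essentially the same approach as the paper: express the half-ball volume as a radial integral of $\theta$, use the evenness of $a_{2k+2}$ (together with the vanishing or constancy of the lower coefficients) to show that opposite half-balls have equal volume up to $O(r^{2k+n+3})$, and then invoke the integrated form of \eqref{eq:surf_geod_sphere} to identify half the ball volume with $\tfrac{n\omega_n}{2\omega_{n-1}}\int_0^r v$. Your presentation is slightly more explicit in spelling out the coefficient-by-coefficient comparison, but the argument is the same.
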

\begin{proof}
The volume of the half-ball $\mathcal B^+(r\mathbf u)$ is expressed by the integral
\[
\vol_n(\mathcal B^+(r\mathbf u))=\int_0^r\int_{S^+(\mathbf u)}\rho^{n-1}\theta(\rho\mathbf v)\id \mathbf v\id \rho=\sum_{i=0}^{k+1}\left(\int_{S^+(\mathbf u)}a_{2i}(\mathbf v)\id \mathbf v\right)\frac{r^{n+2i}}{n+2i}+O(r^{2k+n+3}).
\]
Since the functions $a_{2i}$ are even, this implies that 
\[
\vol_n(\mathcal B^+(r\mathbf u))=\vol_n(\mathcal B^+(-r\mathbf u))+O(r^{2k+n+3}).
\]

As the opposite half-balls $\mathcal B^+(\pm r\mathbf u)$ have equal volumes up to an error term $O(r^{2k+n+3})$, their volumes are equal to half the volume of the geodesic ball $\mathcal B_p(r)$ up to an error of the same order. The latter volume can be obtained by integrating \eqref{eq:surf_geod_sphere}, thus,
\begin{equation*}\label{eq:ball_volume}
\frac{{n}\omega_{n}}{\omega_{n-1}}\int_0^r v(\rho)\id \rho
+O(r^{2k+n+3})=\int_0^r\int_{S_p}\rho^{n-1}\theta(\rho\mathbf v)\id \mathbf v\id \rho=\vol_n(\mathcal B_p(r)).\qedhere
\end{equation*}
\end{proof}

Now we are ready to complete the induction step. 

For $\mathbf v\in \mathring{T}_pM$, let $\Pi_{\mathbf v}\colon T_pM\to T_{\exp(\mathbf v)}M$ denote the parallel transport map along the geodesic $\gamma_{\mathbf v}$.
Let $\mathbf u\in S_p$ be an arbitrary unit tangent vector and consider the geodesic curve $\gamma=\gamma_{\mathbf u}$. Consider the isotopy 
$\Phi_{t}=\exp_{\gamma(t)}\circ\Pi_{t\mathbf u} \circ\exp_p^{-1}$ defined on a small 
neighborhood of $p$ for small values of $t$. Let $X(q)=\left.\frac{\ud}{\ud 
t}\Phi_t(q)\right|_{t=0}$ be the initial velocity vector field of the isotopy. 

It is clear from the definitions that the image of $\mathcal B^+(r\mathbf u)$ under $\Phi_t$ is the half-ball $\mathcal B^+(r\gamma'(t))$ for small $r>0$ and $t$. The boundary  $\partial \mathcal B^+(r\mathbf u)$ of the half-ball is the union of the geodesic disk $\mathcal B^0(r\mathbf u)$ and the geodesic hemisphere $\mathcal S^+(r\mathbf u)$. 

The outer unit normal vector field $N$ of $\mathcal B^+(r\mathbf u)$ is a smooth vector field on the smooth part $\partial \mathcal B^+(r\mathbf u)\setminus \mathcal S^0(r\mathbf u)$ of the boundary $\partial \mathcal B^+(r\mathbf u)$, thus, although it does not extend continuously to $\mathcal S^0(r\mathbf u)$ it is defined almost everywhere. Then, by Lemma \ref{half-ball}, we have
\begin{equation}\label{eq:felgombvar}
\int_{\mathcal B^0(r\mathbf u)}\langle X,N \rangle\id\sigma+\int_{\mathcal S^+(r\mathbf u)}\langle X,N \rangle\id\sigma=\int_{\partial \mathcal B^+(r\mathbf u)}\langle X,N \rangle\id\sigma=\frac{\ud}{\ud t}\vol_n(\mathcal B^+(r\gamma'(t)))\Big|_{t=0}=O(r^{2k+n+3}).
\end{equation}

The images $\Phi_t(\mathcal B^0(r\mathbf u))$ of the geodesic disk $\mathcal B^0(r\mathbf u)$ under the isotopy sweep out the tube about $\gamma$, i.e., 
\[\mathcal T(\gamma|_{[0,\varepsilon]},r)=\bigcup_{t\in[0,\varepsilon]}\Phi_t(\mathcal B^0(r\mathbf u)).\]
Differentiating the volume of this tube with respect to $\varepsilon$ at $\varepsilon=0$, and using equation \eqref{eq:felgombvar}, we obtain
\begin{equation}\label{eq:bullet1}
v(r)=\int_{\mathcal B^0(r\mathbf u)}\langle X,-N \rangle\id\sigma=\int_{\mathcal S^+(r\mathbf u)}\langle X,N \rangle\id\sigma+O(r^{2k+n+3}).
\end{equation} 

For a unit tangent vector  $\mathbf v\in S^+(\mathbf u)$, the map $\Gamma\colon [0,r]\times [-\varepsilon,\varepsilon]\to M$, 
$\Gamma(s,t)=\Phi_t(\gamma_{\mathbf v}(s))$ is a geodesic variation of $\gamma_{\mathbf v}$. Thus, $J(s)=X(\gamma_{\mathbf v}(s))$ is a 
Jacobi field along $\gamma_{\mathbf v}$ with initial values $J(0)=\mathbf u$ and 
$J'(0)=\mathbf 0_p$. The tangential component $\langle J,\gamma_{\mathbf v}'\rangle$ of $J$ must be a linear function of the form $\langle J(t),\gamma_{\mathbf v}'(t)\rangle=at+b$ for some constants $a,b\in \mathbb R$. The initial conditions for $J$ give $a=0$ and $b=\langle \mathbf u,\gamma_{\mathbf v}'(0)\rangle=\langle \mathbf u,\mathbf v\rangle$. As $\gamma'_{\mathbf v}(r)=N(\exp(r\mathbf v))$ by the Gauss lemma, we have
\begin{equation}\label{eq:Jacobi_Gauss}
\langle X(\exp(r\mathbf v)), N(\exp(r\mathbf v))\rangle=\langle J(r), \gamma'_{\mathbf v}(r)\rangle=b=\langle \mathbf u,\mathbf v\rangle.
\end{equation} 

Therefore, equations \eqref{eq:bullet1} and \eqref{eq:Jacobi_Gauss} yield
\begin{equation}\label{eq:bullet2}
v(r)=\int_{S^+(\mathbf u)}\langle\mathbf u,\mathbf v\rangle\theta(r\mathbf v)r^{n-1}\id \mathbf v+O(r^{2k+n+3}).
\end{equation}
The coefficients of $r^{2k+n+1}$ of the Taylor series of the two sides must be equal, thus, using the symmetry \eqref{eq:symmetry_of_a_i}, we obtain
\begin{equation}\label{FH}
2\frac{v^{(2k+n+1)}(0)}{(2k+n+1)!}=\int_{S_p}|\langle\mathbf u,\mathbf v\rangle|a_{2k+2}(\mathbf v)\id \mathbf v.
\end{equation}

We recall some facts about the cosine transform. See 
\cite{Groemer} for more details.
The cosine transform $\CT$ is the integral transform
$\CT \colon\mathcal C^0(\mathbb S^{n-1})\to\mathcal C^0(\mathbb 
S^{n-1})$ defined by
\[(\CT(f))(\mathbf u)=\int_{\mathbb S^{n-1}}|\langle\mathbf u,\mathbf 
v\rangle|f(\mathbf v)\id \mathbf v.\]
It is known that $\CT(g)=0$ if and only if $g$ is odd 
(see \cite[Proposition 3.4.10]{Groemer}).
We also have that the cosine transform of the constant $1$ function is
\[(\CT(1))(\mathbf u)=\int_{\mathbb S^{n-1}}|\langle\mathbf u,\mathbf 
v\rangle|\id\mathbf v=2\omega_{n-1},\]
(see \cite[Lemma 3.4.5]{Groemer}).

The above facts and formula \eqref{FH} imply that the function
\[f(\mathbf v)=a_{2k+2}(\mathbf v)-\frac{v^{(2k+n+1)}(0)}{\omega_{n-1}(2k+n+1)!}\]
is an odd function. On the other hand, as $a_{2k+2}$ is an even function, so is $f$. Therefore, $f$ must vanish, and 
\[a_{2k+2}(\mathbf v)=\frac{v^{(2k+n+1)}(0)}{\omega_{n-1}(2k+n+1)!}\]
is a constant function. This completes the induction step and the proof of Theorem \ref{thm:main}.

\begin{corollary}
	In a harmonic manifold, the volume of solid tubes of small radius $r$ about a curve $\gamma$ is given by equation \eqref{eq:tube_density} with the function
	\[v(r)=\omega_{n-1}r^{n-1}\underline{\theta}(r)=\frac{\omega_{n-1}}{n\omega_n}\vol_{n-1}(\mathcal{S}_p(r)),\]
	where $\underline{\theta}\colon [0,\infty)\to\mathbb R$ is a function for which $\theta=\underline{\theta}\circ\bar r$, and $p$ is an arbitrary point of $M$.
\end{corollary}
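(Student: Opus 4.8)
The plan is to promote the asymptotic identity \eqref{eq:surf_geod_sphere} to an \emph{exact} equality in the genuinely harmonic case, where every error term collapses. First I would record the two consequences of harmonicity that drive the argument: since $\theta$ is radial there is a function $\underline\theta$ with $\theta=\underline\theta\circ\bar r$, and by Theorem \ref{thm:mean_curvature} the mean curvature $h=-\partial_r(\ln(\bar r^{n-1}\theta))$ is then radial as well. In particular $h$ is invariant under the length-preserving involution $\iota$, so $h\circ\iota=h$ holds \emph{exactly}, not merely modulo $O(r^{2k+2})$ as in Lemma \ref{le:stability_D'Atri}. By Theorem \ref{thm:harmonic} the manifold has the tube property, so $V_\gamma(r)=v(r)\,l_\gamma$ for some $v$, and the task is just to identify $v$.

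Next I would revisit the computation in the proof of Lemma \ref{ball_homogeneity}. Equation \eqref{eq:hbh1} for $v'(r)$ is an exact identity, as it comes straight from the Vanhecke--Willmore tube-volume formula with no truncation. Integrating it over $\mathbf u\in S_p$, applying the Fubini symmetry of Lemma \ref{le:Fubini} to replace the quadratic form of the shape operator by its trace $h$, and then using $h\circ\iota=h$, I obtain the exact relation
\begin{equation*}
n\omega_n\,v'(r)=-\omega_{n-1}\int_{S_p}h(r\mathbf v)\,r^{n-1}\theta(r\mathbf v)\id\mathbf v
=\omega_{n-1}\,\frac{\ud}{\ud r}\int_{S_p}r^{n-1}\theta(r\mathbf v)\id\mathbf v,
\end{equation*}
where the last step is Theorem \ref{thm:mean_curvature}. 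This is precisely the chain in Lemma \ref{ball_homogeneity}, but with the $O(r^{2k+n+1})$ term now absent. Integrating in $r$ (both $v$ and the surface integral vanish at $r=0$) yields the exact form of \eqref{eq:surf_geod_sphere}, namely $\tfrac{n\omega_n}{\omega_{n-1}}v(r)=\int_{S_p}r^{n-1}\theta(r\mathbf v)\id\mathbf v$.

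Finally I would substitute the radial form of $\theta$. Since $\theta(r\mathbf v)=\underline\theta(r)$ is independent of $\mathbf v$, the sphere integral evaluates to $\int_{S_p}r^{n-1}\theta(r\mathbf v)\id\mathbf v=n\omega_n\,r^{n-1}\underline\theta(r)$, which is exactly $\vol_{n-1}(\mathcal S_p(r))$ (the constant of integration vanishes because $r^{n-1}\underline\theta(r)\to0$ as $r\to0$ for $n\geq2$). Both displayed forms of $v(r)$ follow at once, and the independence of the right-hand side from $p$ is automatic since $\underline\theta$ is a global function of the radius alone.

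The step I expect to require the most care is the transition from the pointwise identity \eqref{eq:hbh1} to the trace form: as in Lemma \ref{ball_homogeneity}, this rests on the Fubini symmetry of Lemma \ref{le:Fubini} together with the identification (implicit in the Vanhecke--Willmore formula) under which $\int_{S^0(\mathbf v)}\langle L_{\iota(r\mathbf v)}(\mathbf u),\mathbf u\rangle\id\mathbf u$ equals $\omega_{n-1}\,h(\iota(r\mathbf v))$. Everything else is a direct specialization of the main proof, once one observes that its sole source of error terms, Lemma \ref{le:stability_D'Atri}, becomes an exact equality in a harmonic manifold.
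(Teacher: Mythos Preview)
Your argument is correct but follows a different route from the paper's. The paper derives the formula from \eqref{eq:bullet2}: since that estimate holds for every $k$, the error term drops and one is left with $v(r)=\int_{S^+(\mathbf u)}\langle\mathbf u,\mathbf v\rangle\,\theta(r\mathbf v)\,r^{n-1}\id\mathbf v$, which for radial $\theta$ equals $\tfrac12 r^{n-1}\underline\theta(r)\,\CT(1)=\omega_{n-1}r^{n-1}\underline\theta(r)$ via the cosine transform. You instead go back one step earlier in the argument, to the chain in Lemma~\ref{ball_homogeneity}, observing that in a genuinely harmonic manifold $h$ is radial and hence $\iota$-invariant, so the $O(r^{2k+n+1})$ term in \eqref{eq:surf_geod_sphere} disappears for the honest reason that its source $h-h\circ\iota$ vanishes identically; the sphere average then collapses to $n\omega_n\,r^{n-1}\underline\theta(r)$. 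Your approach is arguably more economical, as it bypasses the half-ball Lemma~\ref{half-ball}, the Jacobi-field computation \eqref{eq:Jacobi_Gauss}, and the cosine transform, needing only the Fubini identity of Lemma~\ref{le:Fubini} and the standard fact $\int_{\mathbb S^{n-2}}\langle L\mathbf u,\mathbf u\rangle\id\mathbf u=\omega_{n-1}\tr L$. The paper's route, on the other hand, falls out for free once \eqref{eq:bullet2} has been established for the main theorem, so in context it is the shorter addendum.
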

\begin{proof} Indeed, as \eqref{eq:bullet2} holds for every $k$, we have 
\begin{equation*}\label{eq:bullet3}
v(r)=\int_{S^+(\mathbf u)}\langle\mathbf u,\mathbf v\rangle\theta(r\mathbf v)r^{n-1}\id \mathbf v=\frac12 r^{n-1}\underline{\theta}(r)\CT(1)=\omega_{n-1}r^{n-1}\underline{\theta}(r).\qedhere
\end{equation*}
\end{proof}

\begin{rem}
A.~Gray and L.~Vanhecke \cite{Gray_Vanhecke_balls} posed the general question to what extent a Riemannian manifold is determined by the volumes of small geodesic balls.  An analogue of this question for the volumes of tubes about curves was studied by them in \cite{Gray-Vanhecke}. They proved that some families of Riemannian manifolds can be characterized by the volumes of their geodesic balls or tubes. For example, they proved in  \cite{Gray_Vanhecke_balls}, that if, in a Riemannian manifold, the volume of geodesic balls of small radius $r$  equals $\omega_nr^n$, then the manifold is flat. In \cite[Section 7]{Gray-Vanhecke} they show some characterizations of flat and rank one symmetric spaces in terms of the volumes of the tubes about curves. As the volumes of geodesic balls and volumes of tubes of small radius can be expressed with the help of the volume density function in harmonic manifolds, these results suggest the question whether a harmonic manifold is determined up to local isometry by its volume density function $\theta$. The answer to this question is negative, and counterexamples can be found among the Damek--Ricci spaces. Damek--Ricci spaces are solvable Lie groups with a left invariant Riemannian metric, the Lie algebra of which is a one dimensional extension of a generalized Heisenberg Lie algebra $\mathfrak n=\mathfrak v\oplus\mathfrak z$ with center $\mathfrak{z}$. It is known (\cite[Theorem 1]{Damek_Ricci}) that the dimensions $p=\dim \mathfrak v$ and $q=\dim \mathfrak z$ determine the volume density function of the Damek--Ricci space by $\theta(\mathbf v)=\underline{\theta}(\|\mathbf v\|)$, where \[\underline{\theta}(r)=\cosh^q(r/2)\left(\frac{\sinh(r/2)}{r/2}\right)^{p+q}.\]
However, if $q\equiv 3$ (mod~$4$), then the numbers $p$ and $q$ do not determine the isomorphism class of the Lie algebra $\mathfrak n$ uniquely, and the corresponding Damek--Ricci spaces are not necessarily isometric. An explicit example  when this happens is given in \cite[Section 4.1.9, Theorem, (ii)]{DamekRicci}: There are two non-isomorphic generalized Heisenberg Lie algebras with dimension parameters $p=8$ and $q=3$, and the corresponding  Damek--Ricci spaces are not isometric, because one of them is the quaternionic hyperbolic space, hence symmetric,  while the other is not symmetric.
\end{rem}

\section{Total mean curvature and total scalar curvature of tubular hypersurfaces}

When the volume of the tube $\mathcal T(\gamma,r)$ depends only on the radius $r$ and the length of the curve $\gamma$ for small $r$, then the same is true for the derivatives of this volume function with respect to the radius. The derivatives are geometric invariants of the tubular hypersurface $\mathcal{P}(\gamma,r)$. As $\mathcal{P}(\gamma,r+\Delta)$ is a parallel hypersurface of $\mathcal{P}(\gamma,r)$ lying at distance $\Delta$ away from it, we can apply the Steiner-type formula of E.~Abbena, A.~Gray, and L.~Vanhecke \cite[Theorem 3.5]{Abbena_Gray_Vanhecke} to compute the power series of the volume of $\mathcal T(\gamma,r+\Delta)$ with respect to $\Delta$ for fixed $\gamma$ and $r$. The formula gives
\begin{equation}\label{eq:Steiner}
V_{\gamma}(r+\Delta)=V_{\gamma}(r)+A_{\gamma}(r)\Delta+\int_{\mathcal P(\gamma,r)}\left[-\mu^P(p)\frac{\Delta^2}{2} +\big(\rho(N(p))+\tau^P(p)-\tau(p)\big)\frac{\Delta^3}{6}\right]\id p+O(\Delta^4),
\end{equation}
where $\mu^P(p)$ is the sum of the principal curvatures of $\mathcal P(\gamma,r)$ at $p\in \mathcal P(\gamma,r)$ with respect to the outer unit normal $N(p)$ of the solid tube $\mathcal T(\gamma,r)$ at its boundary point $p$, $\rho(N(p))=\mathrm{Ric}\,(N(p),N(p))$ is the Ricci curvature of $M$ in the direction $N(p)$, $\tau(p)$ and $\tau^P(p)$ are the scalar curvatures of $M$ and $\mathcal P(\gamma,r)$ at $p$, respectively.

Since we have $V_{\gamma}(r)=\omega_{n-1}r^{n-1}l_{\gamma}+O(r^n)$ for any fixed curve $\gamma$, the volume function $V_{\gamma}$  is uniquely determined by its $k$th derivative $V^{(k)}_{\gamma}$ if $k\leq n$. Thus, the following statement is a straightforward consequence of Theorem \ref{thm:main}.

\begin{corollary}
For a connected Riemannian manifold $M$, the following properties are equivalent:
\begin{itemize}
	\item[\emph{(i)}] $M$ is harmonic.
	\item[\emph{(ii)}] For any regularly parameterized simple arc $\gamma$, the $(n-1)$-dimensional volume $A_{\gamma}(r)$ of the tubular hypersurface $\mathcal P(\gamma,r)$ depends only on $r$ and the length $l_{\gamma}$ of $\gamma$.
	\item[\emph{(iii)}] For any geodesic segment $\gamma$, the volume $A_{\gamma}(r)$ depends only on $r$ and the length of $\gamma$.
	\item[\emph{(iv)}] For any regularly parameterized simple arc $\gamma$, the total mean curvature \[H_{\gamma}(r)=\frac{1}{n-1}\int_{\mathcal P(\gamma,r)}\mu^P(p)\id p\] of $\mathcal P(\gamma,r)$ depends only on $r$ and the length of $\gamma$.
	\item[\emph{(v)}]  For any geodesic segment $\gamma$, the total mean curvature $H_{\gamma}(r)$ depends only on $r$ and the length of $\gamma$.
\end{itemize} 
If $M$ is harmonic, then the volume and the total mean curvature of the tubular hypersurface can be expressed as
\begin{equation}\label{eq:surface}
A_{\gamma}(r)=v'(r)l_{\gamma}=\omega_{n-1}(r^{n-1}\underline{\theta}'(r)+(n-1)r^{n-2}\underline{\theta}(r))l_{\gamma},
\end{equation}
and 
\[
H_{\gamma}(r)=-\frac{v''(r)}{n-1}l_{\gamma}=-\omega_{n-1}\left(\frac{r^{n-1}}{n-1}\underline{\theta}''(r)+2r^{n-2}\underline{\theta}'(r)+(n-2)r^{n-3}\underline{\theta}(r)\right)l_{\gamma}.
\]
\end{corollary}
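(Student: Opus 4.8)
The plan is to reduce every implication to Theorem \ref{thm:main} by reading the Steiner-type expansion \eqref{eq:Steiner} as the Taylor expansion of $V_{\gamma}(r+\Delta)$ in the variable $\Delta$. Comparing the coefficients of $\Delta$ and of $\Delta^{2}$ on the two sides yields the two basic identities $A_{\gamma}(r)=V_{\gamma}'(r)$ and $V_{\gamma}''(r)=-\int_{\mathcal P(\gamma,r)}\mu^{P}=-(n-1)H_{\gamma}(r)$, so that $A_{\gamma}=V_{\gamma}'$ and $H_{\gamma}=-V_{\gamma}''/(n-1)=-A_{\gamma}'/(n-1)$, where $'$ denotes $\d/\d r$. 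These identities exhibit the hypersurface quantities as radial derivatives of the solid-tube volume, which is the link that puts all five properties under the control of Theorem \ref{thm:main}.

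For the implications (i)$\Rightarrow$(ii) and (i)$\Rightarrow$(iv) I would use that, by Theorem \ref{thm:main}, harmonicity gives the tube property $V_{\gamma}(r)=v(r)l_{\gamma}$; differentiating once and twice then gives $A_{\gamma}(r)=v'(r)l_{\gamma}$ and $H_{\gamma}(r)=-v''(r)l_{\gamma}/(n-1)$, both functions of $(r,l_{\gamma})$ alone. The implications (ii)$\Rightarrow$(iii) and (iv)$\Rightarrow$(v) are immediate, since geodesic segments form a subclass of regularly parameterized simple arcs. Thus the two chains (i)$\Rightarrow$(ii)$\Rightarrow$(iii) and (i)$\Rightarrow$(iv)$\Rightarrow$(v) will be in place once the return arrows (iii)$\Rightarrow$(i) and (v)$\Rightarrow$(i) are established.

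The substantive part is therefore (iii)$\Rightarrow$(i) and, symmetrically, (v)$\Rightarrow$(i). Here the key step is an additivity observation: if a geodesic $\gamma$ is subdivided at an interior point into geodesic segments $\gamma_{1},\gamma_{2}$, then, because the tangents agree at the junction, the normal slices perpendicular to $\gamma'$ coincide there; consequently $P(\gamma,r)=P(\gamma_{1},r)\cup P(\gamma_{2},r)$ with an overlap of dimension $n-2$ (of measure zero after applying $\exp$), and similarly for the solid tubes. Injectivity of $\exp$ on $T(\gamma,r)$ then forces $A_{\gamma}=A_{\gamma_{1}}+A_{\gamma_{2}}$, and likewise $H_{\gamma}=H_{\gamma_{1}}+H_{\gamma_{2}}$ and $V_{\gamma}=V_{\gamma_{1}}+V_{\gamma_{2}}$. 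Assuming (iii), $A_{\gamma}(r)$ is a function of $(r,l_{\gamma})$ that is additive and monotone nondecreasing in $l_{\gamma}$, hence, by the Cauchy functional equation, proportional: $A_{\gamma}(r)=a(r)l_{\gamma}$. Integrating in $r$ from $0$, where $V_{\gamma}(0)=0$, gives $V_{\gamma}(r)=\big(\int_{0}^{r}a\big)l_{\gamma}$, i.e. the tube property for geodesic curves, whence $M$ is harmonic by Theorem \ref{thm:main}. For (v)$\Rightarrow$(i) the same additivity forces $H_{\gamma}(r)=c(r)l_{\gamma}$, so $V_{\gamma}''(r)=-(n-1)c(r)l_{\gamma}$; integrating twice and using that the constants of integration $V_{\gamma}(0)=0$ and $V_{\gamma}'(0)$ depend only on $l_{\gamma}$ (this is exactly the remark that $V_{\gamma}$, whose leading term is $\omega_{n-1}r^{n-1}l_{\gamma}$, is recovered from its $k$-th derivative once $k\le n$) again yields $V_{\gamma}(r)=v(r)l_{\gamma}$ and hence harmonicity.

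I expect the one genuinely nontrivial point to be the upgrade from ``$A_{\gamma}$ (resp. $H_{\gamma}$) depends only on $(r,l_{\gamma})$'' to honest proportionality in $l_{\gamma}$, which is precisely where the subdivision/additivity argument is needed; once proportionality is secured, reconstructing $V_{\gamma}$ from $A_{\gamma}$ or $H_{\gamma}$ is routine integration with length-proportional initial data. The explicit formulas then follow by substituting $v(r)=\omega_{n-1}r^{n-1}\underline{\theta}(r)$ from the preceding corollary into $A_{\gamma}=v'(r)l_{\gamma}$ and $H_{\gamma}=-v''(r)l_{\gamma}/(n-1)$ and differentiating the product $r^{n-1}\underline{\theta}(r)$ once and twice, which produces \eqref{eq:surface} and the displayed expression for $H_{\gamma}(r)$.
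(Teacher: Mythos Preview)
Your argument is correct and follows essentially the same route as the paper: identify $A_{\gamma}=V_{\gamma}'$ and $(n-1)H_{\gamma}=-V_{\gamma}''$ via the Steiner expansion \eqref{eq:Steiner}, then use that $V_{\gamma}$ is recovered from its first or second radial derivative (since $V_{\gamma}(r)=\omega_{n-1}r^{n-1}l_{\gamma}+O(r^{n})$ fixes the needed initial data), and appeal to Theorem~\ref{thm:main}. The only place you are more explicit than the paper is the additivity-to-proportionality step (``depends on $(r,l_{\gamma})$'' $\Rightarrow$ ``equals $f(r)\,l_{\gamma}$''); the paper leaves this implicit here and only spells it out in the proof of the subsequent theorem on total scalar curvature. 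One minor remark: for $H_{\gamma}$ you cannot invoke monotonicity in $l_{\gamma}$, but continuity in the endpoint (hence in $l_{\gamma}$) is clear and suffices for the Cauchy argument.
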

The total scalar curvature of the tubular surface appears in the third derivative of the volume of $\mathcal T(\gamma,r)$ with respect to $r$. This observation enables us to prove the following theorem.
\begin{theorem} If $M$ is harmonic, then the total scalar curvature of the tubular hypersurface $\mathcal P(\gamma,r)$ about a regularly parameterized simple arc $\gamma$ is given by 
	\begin{equation}\label{eq:total_scalar_curvature}
	\int_{\mathcal P(\gamma,r)}\tau^P(p)\id p=\left(v'''(r)-3(n-1)\underline{\theta}''(0)v'(r)\right)l_{\gamma}
	\end{equation}
	for small values of $r$, hence it depends only on $r$ and the length $l_{\gamma}$ of $\gamma$. 
	
	Conversely, if $\dim M>3$, and for all geodesic segments $\gamma$,  the total scalar curvature of $\mathcal P(\gamma,r)$ depends only on the length of $\gamma$ and $r$, then the manifold is harmonic.
\end{theorem}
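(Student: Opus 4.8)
For the direct statement I would simply read the third $\Delta$-derivative off the Steiner-type formula \eqref{eq:Steiner}. Its $\Delta^3/6$-coefficient gives
\[
V_{\gamma}'''(r)=\int_{\mathcal P(\gamma,r)}\big(\rho(N)+\tau^P-\tau\big)\id p,\qquad\text{so}\qquad \int_{\mathcal P(\gamma,r)}\tau^P\id p=V_{\gamma}'''(r)-\int_{\mathcal P(\gamma,r)}\rho(N)\id p+\int_{\mathcal P(\gamma,r)}\tau\id p .
\]
Since a harmonic manifold is Einstein, $\mathrm{Ric}=\tfrac{\tau}{n}\langle\,,\rangle$ with $\tau$ constant, whence $\rho(N)\equiv\tau/n$ and the last two integrals combine to $\tfrac{(n-1)\tau}{n}A_{\gamma}(r)=\tfrac{(n-1)\tau}{n}V_{\gamma}'(r)$. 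Since $M$ is harmonic, $V_{\gamma}=v(r)l_{\gamma}$ by Theorem \ref{thm:harmonic}, so $V_{\gamma}'''=v'''l_{\gamma}$ and $V_{\gamma}'=v'l_{\gamma}$. Finally I would invoke the standard second-order term $a_2(\mathbf u)=-\tfrac16\mathrm{Ric}(\mathbf u,\mathbf u)$ of the expansion \eqref{eq:power_series}, which for an Einstein manifold gives $\underline{\theta}''(0)=2a_2=-\tau/(3n)$, i.e. $\tfrac{(n-1)\tau}{n}=-3(n-1)\underline{\theta}''(0)$. Substituting proves \eqref{eq:total_scalar_curvature}.

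For the converse the plan is to first extract that $M$ is Einstein, and then to finish by an elementary uniqueness argument. To obtain the Einstein property I would localise the hypothesis exactly as in the proof of Lemma \ref{ball_homogeneity}: writing $\int_{\mathcal P(\gamma,r)}\tau^P\id p$ as an iterated integral over the arclength parameter $t$ and the cross-sectional normal sphere $S^0(\gamma'(t))$ (with the tube area element, which carries a shape-operator factor as in \cite{Vanhecke-Willmore}) and differentiating the assumed identity $\int_{\mathcal P(\gamma,r)}\tau^P\id p=w(r)l_{\gamma}$ with respect to the length at $l=0$. This yields a function $\Psi(r,\mathbf u)=w(r)$ that is independent of $\mathbf u\in SM$. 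Next I would expand $\Psi(r,\mathbf u)$ in powers of $r$ up to the first curvature-bearing coefficient. Feeding the Gauss equation $\tau^P=\tau-2\rho(N)+(\tr L)^2-\tr(L^2)$ together with the $r$-expansions of the shape operator and the area element into this integral, and averaging the $\mathbf v$-dependence over $S^0(\mathbf u)$ by means of $\int_{S^0(\mathbf u)}\mathbf v\otimes\mathbf v\id \mathbf v=\omega_{n-1}(\langle\,,\rangle-\mathbf u\otimes\mathbf u)$, the coefficient takes the form $\alpha\,\tau(p)+\beta\,\mathrm{Ric}(\mathbf u,\mathbf u)+\text{const}$. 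Its independence of $\mathbf u$ forces $\beta\,\mathrm{Ric}(\mathbf u,\mathbf u)$ to be constant in $\mathbf u$, so as long as $\beta\neq0$ we get $\mathrm{Ric}=\lambda\langle\,,\rangle$ at every point, i.e. $M$ is Einstein and hence real analytic by the Kazdan--DeTurck theorem \cite{Kazdan_DeTurck}. I expect the computation and verification of $\beta$ to be the main obstacle: conceptually $\beta$ must vanish when $n=3$, since there $\tau^P=2K$ and the total scalar curvature is controlled by the Gauss--Bonnet theorem and so cannot see the ambient Ricci curvature, and the content of the step is to show that $\beta\neq0$ precisely for $n>3$.

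With $M$ Einstein the argument closes formally. Now $\tau$ is constant (Schur's lemma, $n\ge3$), so the identity from the first paragraph reads $\int_{\mathcal P(\gamma,r)}\tau^P\id p=V_{\gamma}'''(r)+c\,V_{\gamma}'(r)$ with $c=\tfrac{(n-1)\tau}{n}$ for every geodesic segment, and the hypothesis turns it into the linear ordinary differential equation $V_{\gamma}'''(r)+c\,V_{\gamma}'(r)=w(r)l_{\gamma}$. Because $n\ge4$ forces $V_{\gamma}(r)=O(r^{n-1})=O(r^{3})$, the initial data $V_{\gamma}(0)=V_{\gamma}'(0)=V_{\gamma}''(0)=0$ are the same for all $\gamma$; together with the $\gamma$-independent right-hand side $w(r)l_{\gamma}$, uniqueness for this third-order equation makes $V_{\gamma}(r)$ depend only on $l_{\gamma}$ and $r$. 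Thus $M$ has the tube property for geodesic curves, and Theorem \ref{thm:main} gives that $M$ is harmonic.
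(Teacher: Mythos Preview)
Your argument follows the paper's almost exactly: the forward direction is identical, and for the converse you use the same two-step scheme (first extract Einstein from the leading curvature coefficient of the total scalar curvature, then run the third-order ODE uniqueness argument to recover the tube property for geodesics and invoke Theorem~\ref{thm:main}). The only difference is that where you propose to compute the first curvature-bearing coefficient by hand via the Gauss equation and spherical averaging, the paper simply quotes the ready-made expansion of Gheysens--Vanhecke \cite[Theorem~5.1]{Gheysens_Vanhecke}, which gives the coefficient explicitly as a constant multiple of $(n-3)\bigl[(n-4)\tau(p)+(n+2)\rho(\mathbf u)\bigr]$. This confirms your expectation: the $\rho$-coefficient carries an overall factor $(n-3)$, so it vanishes precisely at $n=3$ and is nonzero for $n>3$, which is exactly where the hypothesis $\dim M>3$ enters. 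Citing that result removes what you identified as the main obstacle; otherwise your proof and the paper's coincide.
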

\begin{proof} Assume first that $M$ is harmonic. Then the third derivative of $V_{\gamma}=vl_{\gamma}$ at $r$ equals
\begin{equation}\label{eq:3rd_derivative}
v'''(r)l_{\gamma}=\int_{\mathcal P(\gamma,r)}\big(\rho(N(p))+\tau^P(p)-\tau(p)\big)\id p
\end{equation}
by equation \eqref{eq:Steiner}. Harmonic manifolds are Einstein manifolds with constant Ricci curvature $\rho=-3\underline{\theta}''(0)$   and scalar curvature $\tau=n\rho$ by \cite[Corollary 9.9]{Gray_Tubes}. Expressing the total scalar curvature of $\mathcal P(\gamma,r)$ from \eqref{eq:3rd_derivative} and using \eqref{eq:surface}, we obtain equation \eqref{eq:total_scalar_curvature}. 

For the second part of the theorem, assume that $\dim M>3$ and $M$ has the property that the total scalar curvature $C_{\gamma}(r)$ of the tubular hypersurface $\mathcal P(\gamma,r)$ depends only on $r$ and the length $l_{\gamma}$ if $\gamma$ is a geodesic segment and $r$ is small. Since the total scalar curvature is obviously an additive function of $l_{\gamma}$, it must have the form $C_{\gamma}(r)=c(r)l_{\gamma}$. The initial terms of the Taylor series of $C_{\gamma}(r)$ were computed by L.~Gheysens and L.~Vanhecke \cite{Gheysens_Vanhecke}. According to \cite[Theorem 5.1]{Gheysens_Vanhecke}, if $\gamma\colon [a,b]\to M$ is a fixed injective unit speed curve, then for small radii we have
\[
C_{\gamma}(r)=(n-1)\omega_{n-1}r^{n-4}\int_a^b\left[(n-2)(n-3)-\frac{n-3}{6(n-1)}\{(n-4)\tau(\gamma(t))+(n+2)\rho(\gamma'(t))\}r^2+O(r^4)\right]\id t.
\] 
Using the special form of $C_{\gamma}(r)$,  we get from this equation that
\[
\lim_{r\to 0}\frac{6\big((n-1)(n-2)(n-3)\omega_{n-1}r^{n-4}-c(r)\big)}{(n-3)\omega_{n-1}r^{n-2}}=(n-4)\tau(\gamma(t))+(n+2)\rho(\gamma'(t))
\]
holds for any $t\in[a,b]$. Since $\gamma(t)$ can be any point $p$ of $M$, and $\gamma'(t)=\mathbf u$ can be any unit tangent vector at $p$, we obtain that the quantity $(n-4)\tau(p)+(n+2)\rho(\mathbf u)$ does not depend on the choice of $p\in M$ and the unit tangent vector $\mathbf u\in S_p$. This implies that $M$ is an Einstein manifold with constant Ricci curvature equal to
\[
\rho=\lim_{r\to 0}\frac{6\big((n-1)(n-2)(n-3)\omega_{n-1}r^{n-4}-c(r)\big)}{(n-1)(n-2)(n-3)\omega_{n-1}r^{n-2}}.
\]

Differentiating \eqref{eq:Steiner} three times with respect to $\Delta$ at $\Delta=0$ and using the fact that $M$ is Einstein, we obtain that the function $V_{\gamma}$ satisfies the differential equation
\begin{equation}\label{eq:ode_for_V}
V'''_{\gamma}(r)=(1-n)\rho V'_{\gamma}(r)+c(r)l_{\gamma}
\end{equation}
for small positive values of $r$, for any geodesic segment $\gamma$.
Let $v$ be the unique non-extendable solution of the differential equation 
\begin{equation}\label{eq:ode_for_v}
 v'''=(1-n)\rho v'+c
\end{equation}
with initial condition $ v(0)=v'(0)= v''(0)=0$. Since $V_{\gamma}(0)=V'_{\gamma}(0)=V_{\gamma}''(0)=0$, comparing the differential equations \eqref{eq:ode_for_V} and \eqref{eq:ode_for_v}, we see that $ V_{\gamma}(r)=v(r)l_{\gamma}$ must hold for any geodesic segment $\gamma$ and any sufficiently small $r$, hence $M$ is harmonic by Theorem \ref{thm:main}.
\end{proof}
\begin{rem}
	We do not know whether condition $\dim M>3$ can be dropped in the second half of the theorem. As for the specialities of the case of 3-dimensional manifolds, see the comments in \cite{Gheysens_Vanhecke} following the proof of Theorem 5.4.
\end{rem}
\section{Some characterizations of D'Atri spaces}
Ball-homogeneous spaces were introduced by O.~Kowalski and L.~Vanhecke \cite{Kowalski_Vanhecke} as Riemannian manifolds in which the volume of geodesic balls of small radii depends only on the radius of the ball. Analogously, we say that a Riemannian manifold is \emph{half-ball homogeneous} if the volume of a geodesic half-ball of small radius depends only on the radius of the half-ball.
\begin{theorem}\label{thm:D'Atri}
	The following conditions for a Riemannian manifold $M$ are equivalent:
	\begin{itemize}
		\item[\emph{(i)}] The manifold is a D'Atri space.
		\item[\emph{(ii)}] The manifold is half-ball homogeneous.
		\item[\emph{(iii)}] The manifold is real analytic, and the functions $t\mapsto a_k(\gamma'(t))$,  are constant for every $k$ and every unit speed geodesic curve $\gamma$, where the functions $a_k\colon SM\to \mathbb R$ are defined by equation \eqref{eq:power_series}.
		\item[\emph{(iv)}]  The volume density function $\theta$ is a first integral of the geodesic flow.
	\end{itemize}
\end{theorem}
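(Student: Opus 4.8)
My plan is to recast all four conditions as properties of the volume density $\theta$ and the geodesic flow. For (i)$\Leftrightarrow$(iv): by Theorem~\ref{thm:mean_curvature} one has $h=-\frac{n-1}{\bar r}-\partial_r(\ln\circ\theta)$ on $\breve TM$, and since $\bar r\circ\iota=\bar r$ the first term is $\iota$-invariant, so $h=h\circ\iota$ holds iff $\partial_r(\ln\circ\theta)$ is $\iota$-invariant. Because $\ln\circ\theta$ is $\iota$-invariant by Theorem~\ref{thm:volume_symmetry}, Lemma~\ref{le:iota_inv} applied to $f=\ln\circ\theta$ turns this into $\mathbf G(\ln\circ\theta)=0$, i.e. $\theta$ is a first integral of the geodesic flow; together with the stated characterization of D'Atri spaces by $h=h\circ\iota$ this gives (i)$\Leftrightarrow$(iv). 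I will also use the classical description of a D'Atri space in normal coordinates, where $s_p$ is $\mathbf w\mapsto-\mathbf w$ and volume-preservation amounts to $\theta(\mathbf w)=\theta(-\mathbf w)$; thus being D'Atri is equivalent to $\theta$ being fiberwise even. That (iv) already forces this evenness is transparent from the identity $\nu=\phi_1\circ\iota$, where $\nu$ is the fiberwise antipode and $\phi_1(\mathbf v)=\gamma_{\mathbf v}'(1)$: it gives $\theta\circ\nu=\theta\circ\phi_1\circ\iota=\theta\circ\iota=\theta$ whenever $\theta\circ\phi_1=\theta$.

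For (i)$\Leftrightarrow$(ii) I argue through this evenness. If $M$ is D'Atri, then $\vol_n(\mathcal B^+(r\mathbf u))=\vol_n(\mathcal B^+(-r\mathbf u))$; as these two half-balls meet in a set of measure zero and fill $\mathcal B_p(r)$, each equals $\frac12\vol_n(\mathcal B_p(r))$, which is already independent of the direction $\mathbf u$. To remove the dependence on $p$ I differentiate the area of geodesic spheres along a geodesic $\gamma=\gamma_{\mathbf u}$ using the isotopy $\Phi_t=\exp_{\gamma(t)}\circ\Pi_{t\mathbf u}\circ\exp_p^{-1}$ of Section~\ref{sec:3}, which carries $\mathcal S_p(r)$ to $\mathcal S_{\gamma(t)}(r)$; the first variation of area equals, up to sign, $\int_{S_p}h(r\mathbf v)\langle\mathbf u,\mathbf v\rangle\,r^{n-1}\theta(r\mathbf v)\id\mathbf v$, where $\langle\mathbf u,\mathbf v\rangle$ is the normal component of the variation field by \eqref{eq:Jacobi_Gauss}. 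By Theorem~\ref{thm:mean_curvature} the factor $h(r\mathbf v)r^{n-1}\theta(r\mathbf v)=-\partial_r(r^{n-1}\theta(r\mathbf v))$ is even in $\mathbf v$ while $\langle\mathbf u,\mathbf v\rangle$ is odd, so the integral vanishes; hence $\vol_{n-1}(\mathcal S_p(r))$ and $\vol_n(\mathcal B_p(r))$ are constant on the connected manifold $M$, and (ii) follows. Conversely, half-ball homogeneity makes $\int_{S^+(\mathbf u)}\theta(r\mathbf v)\id\mathbf v$ independent of $\mathbf u$; the even part of $\mathbf v\mapsto\theta(r\mathbf v)$ contributes a direction-independent constant, so the odd part has a hemisphere transform that is at once odd and constant in $\mathbf u$, hence identically zero. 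Injectivity of the hemisphere transform on odd functions (via the Funk--Hecke multipliers, which are nonzero in every odd degree; cf.\ \cite{Groemer}) then forces $\theta(r\cdot)$ to be even, so $M$ is D'Atri.

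The equivalence (iii)$\Leftrightarrow$(iv) is the power-series form of the first-integral property. Assuming $M$ real analytic, $\theta(r\mathbf u)=\sum_i a_i(\mathbf u)r^i$ with analytic $a_i\colon SM\to\mathbb R$, and the computation recalled in the proof of Lemma~\ref{le:stability_D'Atri} gives $\mathbf G_{r\mathbf u}\theta=r\sum_i(\mathbf G_{\mathbf u}a_i)r^i$. Thus $\theta$ is a first integral iff $\mathbf G_{\mathbf u}a_i=0$ for every $i$ and every $\mathbf u$, i.e. iff $t\mapsto a_i(\gamma'(t))$ is constant along every unit-speed geodesic $\gamma$, which is precisely the second clause of (iii).

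The principal obstacle is the real analyticity that (iii) presupposes: the functions $a_k$ are defined only through the series \eqref{eq:power_series}, so to include (iii) in the equivalence I must know that every D'Atri space is real analytic. In the main theorem this was supplied by the Einstein property together with the DeTurck--Kazdan theorem, but a D'Atri space need not be Einstein, so I would instead invoke the known real analyticity of D'Atri spaces as the key external input; all the remaining implications above hold for merely smooth manifolds. The only other nonroutine ingredient is the injectivity of the hemisphere transform on odd functions used in (ii)$\Rightarrow$(i), which is a standard fact from the theory of spherical integral transforms.
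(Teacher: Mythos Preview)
Your proof is correct and the logical structure is sound, but it differs from the paper's in several places worth noting.

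For (i)$\Leftrightarrow$(iv) you give a clean one-shot argument: since $\ln\circ\theta$ is $\iota$-invariant by Theorem~\ref{thm:volume_symmetry}, Lemma~\ref{le:iota_inv} says $\partial_r(\ln\circ\theta)$ is $\iota$-invariant iff $\mathbf G(\ln\circ\theta)=0$, and via Theorem~\ref{thm:mean_curvature} this is exactly $h=h\circ\iota$. The paper instead proves a cycle (i)$\Rightarrow$(iii)$\Rightarrow$(iv)$\Rightarrow$(i): for (i)$\Rightarrow$(iii) it runs an induction on $k$ using the Vanhecke identity~\eqref{eq:Vanhecke} (odd coefficients vanish; the even step reads $\alpha_{2k}'(0)=0$ from the identity once lower coefficients are known constant), and (iv)$\Rightarrow$(i) is the direct chain $\theta(\mathbf v)=\theta(\gamma'_{\mathbf v}(-1))=\theta(\iota(\gamma'_{\mathbf v}(-1)))=\theta(-\mathbf v)$, which is your $\nu=\phi_1\circ\iota$ observation in different notation. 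Your route exploits Lemma~\ref{le:iota_inv} more fully and avoids the induction entirely; the paper's route ties the argument to the same Vanhecke machinery used in Section~\ref{sec:3}.

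For (i)$\Rightarrow$(ii) the paper simply cites G\"unther--Pr\"ufer for ball-homogeneity and then halves the ball by the geodesic symmetry. You replace that citation by a self-contained first-variation argument: the derivative of $\vol_{n-1}(\mathcal S_{\gamma(t)}(r))$ along the isotopy $\Phi_t$ is $-\int_{S_p}h(r\mathbf v)\langle\mathbf u,\mathbf v\rangle r^{n-1}\theta(r\mathbf v)\,\ud\mathbf v$, and evenness of $\theta(r\,\cdot\,)$ (hence of $h(r\,\cdot\,)r^{n-1}\theta(r\,\cdot\,)=-\partial_r(r^{n-1}\theta(r\,\cdot\,))$) against the odd factor $\langle\mathbf u,\mathbf v\rangle$ kills the integral. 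This is a nice internal proof of ball-homogeneity for D'Atri spaces. For (ii)$\Rightarrow$(i) both you and the paper use the hemispherical transform and the injectivity on odd functions from \cite{Groemer}; you spell out the even/odd decomposition, the paper just cites Proposition~3.4.11 there.

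Both approaches need Szab\'o's real-analyticity of D'Atri spaces as the external input to bring (iii) into the cycle; you flag this correctly. One small technical point you may want to make explicit: your (i)$\Leftrightarrow$(iv) argument via $\ln\circ\theta$ lives on $\breve TM=\{\theta\neq 0,\ \bar r\neq 0\}$, whereas (iv) is stated on all of $\mathring TM$; the gap closes because once (i) gives analyticity, $\mathbf G\theta$ is analytic and vanishes on the open dense set $\breve TM$, hence everywhere.
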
 
\begin{proof}
(i)$\Rightarrow$(ii). P.~G\"unther and F.~Pr\"ufer \cite{Gunther_Prufer} proved that every D'Atri space is ball-homogeneous. Local central geodesic reflections  preserve volume by the definition of a D'Atri space. Reflecting a small half-ball in its center takes the half-ball to the complementary half-ball. As these half-balls have equal volumes, their volume is half the volume of a ball of radius $r$.

(ii)$\Rightarrow$(i). If the volume of a half-ball of radius $r$ is $b(r)$, then for any unit tangent vector $\mathbf u\in SM$ and any small $r>0$, we have 
\[b(r)=\int_{B^+(r\mathbf u)}\theta(\mathbf v)\id \mathbf v=\int_0^r\rho^{n-1}\int_{S^+(\mathbf u)}\theta(\rho\mathbf v)\id \mathbf v\id \rho.\]
Differentiating with respect to $r$, we obtain
\[b'(r)=r^{n-1}\int_{S^+(\mathbf u)}\theta(r\mathbf v)\id \mathbf v.\]
This means that for any $p\in M$, and any small $r$, the hemispherical transformation of the function $\mathbf v\mapsto \theta(r\mathbf v)$ defined on the unit sphere $S_p$ is constant. This implies that the restriction of $\theta$ onto the sphere  $S_p(r)$ is an even function \cite[Proposition 3.4.11]{Groemer}, so local geodesic symmetries are volume-preserving. 

(i)$\Rightarrow$(iii) D'Atri spaces are real analytic by a result of Z.~I.~Szab\'o \cite{Szabo2}. As $\theta$ is even for a D'Atri space, $a_{2k+1}\equiv 0$ for all $k$, hence it is enough to deal with the even coefficients $a_{2k}$. For these, we show by induction on $k$ that if $\gamma$ is a unit speed geodesic, then $a_{2k}\circ \gamma'$ is constant. As $a_0\equiv 1$, the base case is obvious. Assume that the functions $a_{2j}\circ\gamma'$ are all constant for $0\leq j<k$. Then for any $t_0$ in the domain of $\gamma$, applying equation \eqref{eq:Vanhecke} for $\mathbf u=\gamma'(t_0)$, we obtain $0=\alpha_{2k}'(0)=(a_{2k}\circ \gamma')'(t_0)$, therefore $a_{2k}\circ \gamma'$ is constant.

(iii)$\Rightarrow$(iv). If $\gamma$ is an arbitrary  geodesic with $\gamma'(t_0)=r\mathbf u$, where $\mathbf u\in SM$, then 
\[
\theta(\gamma'(t))=\sum_{k=0}^{\infty}a_k(\gamma_{\mathbf u}'(r(t-t_0)))r^k.
\]
As the functions $a_k\circ\gamma_{\mathbf u}'$ are constant, $\theta\circ \gamma'$ is constant as well.

(iv)$\Rightarrow$(i). If (iv) holds, then for any $\mathbf v\in \mathring TM$ for which $-\mathbf v\in \mathring TM$ as well, we have
\[
\theta(\mathbf v)=\theta(\gamma_{\mathbf v}'(0))=\theta(\gamma_{\mathbf v}'(-1))=\theta(\iota(\gamma_{\mathbf v}'(-1)))=\theta(-\mathbf v),
\]
so local geodesic symmetries are volume-preserving.
\end{proof}
\begin{rem}\label{rem:proof_reduction}
	As it was noted in Section \ref{sec:2}, our proof of Theorem \ref{thm:main} reduces to a short proof of the fact that the tube property implies harmonicity. Indeed, by Theorems \ref{DAtri} and \ref{thm:D'Atri}, the tube property implies that the manifold is D'Atri, and half-ball homogeneous. For this reason,  assuming the tube property, one can skip all the lemmata of Section \ref{sec:3} and start the proof of harmonicity right after Lemma \ref{half-ball}. Then equations \eqref{eq:felgombvar}-\eqref{eq:bullet2} will be true without the error terms. In particular, equation \eqref{eq:bullet2} gives that the cosine transform of the even function $S_p\to \mathbb R$ defined by $\mathbf v\mapsto \theta(r\mathbf v)$ is the constant $2v(r)/r^{n-1}$ function for any $p\in M$, and $r<\inj(p)$. Using the properties of the cosine transform, this yields that $\theta(r\mathbf v)=v(r)/(\omega_{n-1}r^{n-1})$ depends only on $r$ for $\mathbf v\in SM$. 
\end{rem}

\section{Acknowledgements} The authors are grateful to professor Gudlaugur Thorbergsson for helpful discussions at the University of Cologne in 2013, that motivated the research presented above. They are also indebted to professor Eduardo Garc\'ia R\'io who proposed them the study of the total scalar curvature of tubes in harmonic manifolds during a personal communication at the Conference on Differential Geometry and its Applications in Brno in 2016. 
\bibliographystyle{ieeetr}
\bibliography{Tube}

\begin{thebibliography}{10}

\bibitem{tube1}
B.~Csik\'os and M.~Horv\'ath, ``Harmonic manifolds and the volume of tubes
  about curves,'' {\em J. Lond. Math. Soc. (2)}, vol.~94, no.~1, pp.~141--160,
  2016.

\bibitem{Copson_Ruse}
E.~T. Copson and H.~S. Ruse, ``Harmonic {R}iemannian spaces,'' {\em Proc. Roy.
  Soc. Edinburgh}, vol.~60, pp.~117--133, 1940.

\bibitem{Lichnerowicz}
A.~Lichnerowicz, ``Sur les espaces riemanniens compl\`etement harmoniques,''
  {\em Bull. Soc. Math. France}, vol.~72, pp.~146--168, 1944.

\bibitem{Ledger}
A.~J. Ledger, ``Symmetric harmonic spaces,'' {\em J. London Math. Soc.},
  vol.~32, pp.~53--56, 1957.

\bibitem{Walker}
A.~G. Walker, ``On {L}ichnerowicz's conjecture for harmonic 4-spaces,'' {\em J.
  London Math. Soc.}, vol.~24, pp.~21--28, 1949.

\bibitem{Nikolayevsky}
Y.~Nikolayevsky, ``Two theorems on harmonic manifolds,'' {\em Comment. Math.
  Helv.}, vol.~80, no.~1, pp.~29--50, 2005.

\bibitem{Szabo}
Z.~I. Szab{\'o}, ``The {L}ichnerowicz conjecture on harmonic manifolds,'' {\em
  J. Differential Geom.}, vol.~31, no.~1, pp.~1--28, 1990.

\bibitem{Allamigeon}
A.-C. Allamigeon, ``Propri\'et\'es globales des espaces de {R}iemann
  harmoniques,'' {\em Ann. Inst. Fourier (Grenoble)}, vol.~15, no.~fasc. 2,
  pp.~91--132, 1965.

\bibitem{Knieper}
G.~Knieper, ``New results on noncompact harmonic manifolds,'' {\em Comment.
  Math. Helv.}, vol.~87, no.~3, pp.~669--703, 2012.

\bibitem{Damek_Ricci}
E.~Damek and F.~Ricci, ``A class of nonsymmetric harmonic {R}iemannian
  spaces,'' {\em Bull. Amer. Math. Soc. (N.S.)}, vol.~27, no.~1, pp.~139--142,
  1992.

\bibitem{DamekRicci}
J.~Berndt, F.~Tricerri, and L.~Vanhecke, {\em Generalized {H}eisenberg groups
  and {D}amek-{R}icci harmonic spaces}, vol.~1598 of {\em Lecture Notes in
  Mathematics}.
\newblock Springer-Verlag, Berlin, 1995.

\bibitem{Heber}
J.~Heber, ``On harmonic and asymptotically harmonic homogeneous spaces,'' {\em
  Geom. Funct. Anal.}, vol.~16, no.~4, pp.~869--890, 2006.

\bibitem{Michel}
D.~Michel, ``Comparaison des notions de vari\'et\'es riemanniennes globalement
  harmoniques et fortement harmoniques,'' {\em C. R. Acad. Sci. Paris S\'er.
  A-B}, vol.~282, no.~17, pp.~Aiii, A10007--A1010, 1976.

\bibitem{Csikos_Horvath_2gomb}
B.~{Csik\'os} and M.~{Horv\'ath}, ``{On the volume of the intersection of two
  geodesic balls.},'' {\em {Differ. Geom. Appl.}}, vol.~29, no.~4,
  pp.~567--576, 2011.

\bibitem{Csikos_Horvath}
B.~Csik{\'o}s and M.~Horv{\'a}th, ``A characterization of harmonic spaces,''
  {\em J. Differential Geom.}, vol.~90, no.~3, pp.~383--389, 2012.

\bibitem{Hotelling}
H.~Hotelling, ``Tubes and {S}pheres in n-{S}paces, and a {C}lass of
  {S}tatistical {P}roblems,'' {\em Amer. J. Math.}, vol.~61, no.~2,
  pp.~440--460, 1939.

\bibitem{Weyl}
H.~Weyl, ``On the {V}olume of {T}ubes,'' {\em Amer. J. Math.}, vol.~61, no.~2,
  pp.~461--472, 1939.

\bibitem{Gray-Vanhecke}
A.~Gray and L.~Vanhecke, ``The volumes of tubes about curves in a {R}iemannian
  manifold,'' {\em Proc. London Math. Soc. (3)}, vol.~44, no.~2, pp.~215--243,
  1982.

\bibitem{Gray_Vanhecke_2}
A.~Gray and L.~a. Vanhecke, ``The volumes of tubes in a {R}iemannian
  manifold,'' {\em Rend. Sem. Mat. Univ. Politec. Torino}, vol.~39, no.~3,
  pp.~1--50 (1983), 1981.

\bibitem{Besse}
A.~L. Besse, {\em Manifolds all of whose geodesics are closed}, vol.~93 of {\em
  Ergebnisse der Mathematik und ihrer Grenzgebiete [Results in Mathematics and
  Related Areas]}.
\newblock Springer-Verlag, Berlin-New York, 1978.
\newblock With appendices by D. B. A. Epstein, J.-P. Bourguignon, L.
  B\'erard-Bergery, M. Berger and J. L. Kazdan.

\bibitem{Gray_Tubes}
A.~Gray, {\em Tubes}, vol.~221 of {\em Progress in Mathematics}.
\newblock Birkh\"auser Verlag, Basel, second~ed., 2004.
\newblock With a preface by Vicente Miquel.

\bibitem{Kazdan_DeTurck}
D.~M. DeTurck and J.~L. Kazdan, ``Some regularity theorems in {R}iemannian
  geometry,'' {\em Ann. Sci. \'Ecole Norm. Sup. (4)}, vol.~14, no.~3,
  pp.~249--260, 1981.

\bibitem{Vanhecke}
L.~Vanhecke, ``A note on harmonic spaces,'' {\em Bull. London Math. Soc.},
  vol.~13, no.~6, pp.~545--546, 1981.

\bibitem{Vanhecke-Willmore}
L.~Vanhecke and T.~J. Willmore, ``Interaction of tubes and spheres,'' {\em
  Math. Ann.}, vol.~263, no.~1, pp.~31--42, 1983.

\bibitem{Groemer}
H.~Groemer, {\em Geometric applications of {F}ourier series and spherical
  harmonics}, vol.~61 of {\em Encyclopedia of Mathematics and its
  Applications}.
\newblock Cambridge University Press, Cambridge, 1996.

\bibitem{Gray_Vanhecke_balls}
A.~Gray and L.~Vanhecke, ``Riemannian geometry as determined by the volumes of
  small geodesic balls,'' {\em Acta Math.}, vol.~142, no.~3-4, pp.~157--198,
  1979.

\bibitem{Abbena_Gray_Vanhecke}
E.~Abbena, A.~Gray, and L.~Vanhecke, ``Steiner's formula for the volume of a
  parallel hypersurface in a {R}iemannian manifold,'' {\em Ann. Scuola Norm.
  Sup. Pisa Cl. Sci. (4)}, vol.~8, no.~3, pp.~473--493, 1981.

\bibitem{Gheysens_Vanhecke}
L.~Gheysens and L.~Vanhecke, ``Total scalar curvature of tubes about curves,''
  {\em Math. Nachr.}, vol.~103, pp.~177--197, 1981.

\bibitem{Kowalski_Vanhecke}
O.~Kowalski and L.~Vanhecke, ``Ball-homogeneous and disk-homogeneous
  {R}iemannian manifolds,'' {\em Mathematische Zeitschrift}, vol.~180, no.~4,
  pp.~429--444, 1982.

\bibitem{Gunther_Prufer}
P.~G{\"u}nther and F.~Pr{\"u}fer, ``Mean value operators, differential
  operators and {D}'{A}tri spaces,'' {\em Annals of Global Analysis and
  Geometry}, vol.~17, no.~2, pp.~113--127, 1999.

\bibitem{Szabo2}
Z.~I. Szab{\'o}, ``Spectral theory for operator families on {R}iemannian
  manifolds,'' in {\em Differential geometry: {R}iemannian geometry ({L}os
  {A}ngeles, {CA}, 1990)}, vol.~54 of {\em Proc. Sympos. Pure Math.},
  pp.~615--665, Amer. Math. Soc., Providence, RI, 1993.

\end{thebibliography}
\end{document}